\def\rmd{\mathrm{d}}
\def\R{\mathbb{R}}
\providecommand{\abs}[1]{\left\lvert#1\right\rvert}
\providecommand{\set}[1]{\left\{#1\right\}}
\providecommand{\vect}[1]{\mathbf{#1}}
\def\cover{{\mathcal{N}}}
\def\pack{{\mathcal{P}}}
\def\diamcover{{\mathcal{D}}}
\DeclareMathOperator\diameter{diam}
\DeclareMathOperator\gen{gen}
\newtheorem{definition}{Definition}[section]
\newtheorem{lemma}[definition]{Lemma}
\newtheorem{theorem}[definition]{Theorem}
\newtheorem{corollary}[definition]{Corollary}
\newtheorem{example}[definition]{Example}
\begin{document}

  \title{Generalised Cantor sets and the dimension of products}
\author{Eric J. Olson\footnote{Mathematics and Statistics,
		University of Nevada,
		Reno, NV, \textup{89507}, USA. \texttt{ejo@unr.edu}}, James C. Robinson\footnote{Mathematics Institute, University of Warwick, Coventry, UK, CV\textup{4} \textup{7}AL. \texttt{j.c.robinson@warwick.ac.uk}}, and Nicholas Sharples\footnote{Department of Mathematics, Imperial College London, London, UK, SW\textup{7} \textup{2}AZ. \texttt{nicholas.sharples@gmail.com}}}  		  

\maketitle

\begin{abstract}
In this paper we consider the relationship between the Assouad and box-counting dimension and how both behave under the operation of taking products. We introduce the notion of `equi-homogeneity' of a set, which requires a uniformity in the size of local covers at all lengths and at all points. We prove that the Assouad and box-counting dimensions coincide for sets that have equal upper and lower box-counting dimensions provided that the set `attains' these dimensions (analogous to `$s$-sets' when considering the Hausdorff dimension), and the set is equi-homogeneous. Using this fact we show that for any $\alpha\in(0,1)$ and any $\beta,\gamma\in(0,1)$ such that $\beta+\gamma\ge1$ we can construct two generalised Cantor sets $C$ and $D$ such that $\dim_B C =\alpha \beta$, $\dim_B D =\alpha \gamma$, and $\dim_A C=\dim_A D=\dim_A(C\times D)=\dim_B(C\times D)=\alpha$.
\end{abstract}

\section{Introduction}

In this paper we study the behaviour of the box-counting and Assouad
dimensions (whose definitions we give below) under the action of
taking the Cartesian product of sets. Relatively straightforward
arguments can be used to show that the Assouad and upper box-counting dimensions satisfy
\[
\dim(A\times B)\le\dim A+\dim B,
\]
but constructing examples showing that this inequality is strict is
less straightforward. For the box-counting dimension, the first
example of sets for which there is strict inequality was constructed
by Robinson \& Sharples in \cite{RobinsonSharples13RAEX}: these are
Cantor-like sets with carefully controlled ratios, much as those in
this paper. A significantly simpler example involving two countable
sets followed later from Olson \& Robinson \cite{ORbd}. For the
Assouad dimension, there is an example of strict inequality due to
Larman \cite{Larman} (see also Section 9.2 in Robinson \cite{JCR}) of
two subsets of $\R$ that accumulate at zero in such a way that the
sets and their product all have dimension one.

In this paper we provide a unified treatment of the two dimensions
using `generalised Cantor sets', i.e.\ Cantor sets in which we allow
the portion removed to vary at each stage of the construction in a controlled way.
Our argument to calculate the Assouad dimension of generalised Cantor sets and their products relies on the `equi-homogeneity' of these sets (defined below): roughly this is the property that the range of the number of balls required in the `local covers' of the set is uniformly bounded at all length-scales.
We discuss equi-homogeneity in a more general setting in \cite{HORS} where we prove that the attractors of a large class of iterated function systems are equi-homogeneous. However, the arguments presented here will serve as prototypes for the more general results in \cite{HORS}.

\subsection{Counting covers}
We begin by defining some notions of dimension for subsets of a metric space $\left(X,\rmd_{X}\right)$. We adopt the notation $B_{\delta}\left(x\right)$ for the closed ball of radius $\delta$ with centre $x\in X$, and for brevity we refer to sets of this form as \textit{$\delta$-balls}. For a set $F\subset X$ and a length $\delta>0$ we denote by $\cover(F,\delta)$ the minimum number of $\delta$-balls such that $F$ is contained in their union. If $\cover\left(F,\delta\right)$ is finite for all $\delta>0$ we say that the set $F$ is \emph{totally bounded}.
We recall that for each $\delta>0$ the function $\cover(\cdot,\delta)$ is
\begin{itemize}
\item monotonic, that is $A\subset B \Rightarrow \cover(A,\delta)\leq
  \cover(B,\delta)$, and
\item subadditive, that is $\cover(A\cup B,\delta)\leq
  \cover(A,\delta)+\cover(B,\delta)$,
\end{itemize}
and that for each set $F\subset X$ the function $\cover(F,\cdot)$ is non-increasing.

There are many similar geometric quantities, some of which we will
make use of in what follows:
\begin{itemize}
\item $\diamcover(F,\delta)$, the minimum number of sets of diameter $\delta$ that cover $F$, where the diameter of a set
$A$ is given by $\diameter(A)=\sup\set{\abs{x-y}:\ x,y\in A}$;
\item $\pack(F,\delta)$, the maximum number of disjoint $\delta$-balls with centres in $F$.
\end{itemize}
It is a short exercise to establish that these geometric quantities satisfy
\begin{align}\label{geometric inequalities}
  \diamcover(F,4\delta)&\leq \cover\left(F,2\delta\right) \leq \pack\left(F,\delta\right)
  \leq \diamcover(F,\delta)
\end{align}
(see, for example, Definitions 3.1 in Falconer \cite{BkFalconer03} or
Lemma 2.1 in Robinson \& Sharples \cite{RobinsonSharples13RAEX}).

We adopt the cover by $\delta$-balls as our primary measure since it is convenient for sets of the form $B_{\delta}\left(x\right)\cap F$, which feature in the definition of the Assouad dimension.

\subsection{Box-Counting Dimension}

First, we recall the definition of the familiar box-counting
dimensions.

\begin{definition}\label{definition - box-counting dimensions}
  For a totally bounded set $F\subset X$ we define the lower and upper
  box-counting dimensions of $F$ as the quantities
  \begin{align*}
    \dim_{\rm LB}F&:= \liminf_{\delta\rightarrow 0+} \frac{\log \cover(F,\delta)}{-\log \delta},\\
    \text{and}\qquad \dim_{\rm B}F&:= \limsup_{\delta\rightarrow 0+}
    \frac{\log \cover(F,\delta)}{-\log \delta}
  \end{align*}
  respectively.
\end{definition}

In light of the inequalities \eqref{geometric inequalities}, replacing $\cover\left(F,\delta\right)$ with any of the geometric quantities mentioned above gives an equivalent definition.
The box-counting dimensions essentially capture the exponent
$s\in\R^{+}$ for which the minimum number of $\delta$-balls
required to cover $F$ scales like $\cover(F,\delta)\sim \delta^{-s}$. More
precisely, it follows from Definition \ref{definition - box-counting dimensions} that for all $\delta_{0}>0$ and any $\varepsilon>0$ there exists a constant $C\geq 1$ such that
\begin{align}
  C^{-1}\delta^{-\dim_{\rm LB}F+\varepsilon} &\leq
  \cover(F,\delta)\leq C\delta^{-\dim_{\rm B}F-\varepsilon} &
  \text{for all}\; 0<\delta\leq\delta_0.\label{box-counting growth bounds}
\end{align}

In some cases the bounds \eqref{box-counting growth bounds} will also hold at the limit $\varepsilon\rightarrow 0$, that is for each $\delta_{0}>0$ there exists a constant $C\geq 1$ such that
\begin{equation}\label{attains}
\frac{1}{C} \delta^{-\dim_{\rm LB}F}\leq \cover\left(F,\delta\right)\leq C\delta^{-\dim_{\rm B}F}\qquad \text{for all} \;0<\delta\leq \delta_{0},
\end{equation}
giving precise control of the growth of $\cover(F,\delta)$.
We distinguish this class of sets in the following definition:
\begin{definition}\label{boxattain}
  We say that a bounded set $F\subset X$ \emph{attains} its lower
  box-counting dimension if for all $\delta_{0}>0$ there exists a positive constant $C\leq 1$ such that
  \begin{align*}
    \cover(F,\delta)&\geq C\delta^{-\dim_{\rm LB}F} &\mbox{for all}\quad
    0<\delta\leq\delta_0.
\intertext{Similarly, we say that $F$ \emph{attains} its upper box-counting dimension if for all $\delta_{0}>0$ there exists a constant $C\geq 1$ such that}
    \cover(F,\delta)&\leq C\delta^{-\dim_{\rm B}F} &\mbox{for all}\quad
    0<\delta\leq\delta_0.
  \end{align*}
\end{definition}

We remark that a similar distinction is made with regard to the
Hausdorff dimension of sets: recall that the Hausdorff measures are a
one-parameter family of measures, denoted $\mathcal{H}^{s}$ with
parameter $s\in\R^{+}$, and that for each set $F\subset \R^{n}$ there
exists a value $\dim_{\rm H}F\in\R^{+}$, called the Hausdorff dimension of
$F$, such that
\begin{equation*}
  \mathcal{H}^{s}\left(F\right)=
  \begin{cases}
    \infty & s<\dim_{\rm H}F\\
    0 & s>\dim_{\rm H}F.
  \end{cases}
\end{equation*}
For a set $F$ to have Hausdorff dimension $d$ it is sufficient, but
not necessary, for the Hausdorff measure with parameter $d$ to satisfy
$0<\mathcal{H}^{d}\left(F\right)<\infty$. Sets with this property are
sometimes called $d$-sets (see, for example, \cite{BkFalconer03}
pp.32) and are distinguished as they have many convenient
properties. For example, the Hausdorff dimension product formula
$\dim_{\rm H}\left(F\times G\right)\geq \dim_{\rm H}F+\dim_{\rm H}G$ was first
proved for sets $F$ and $G$ in this restricted class (see Besicovitch
\& Moran \cite{BesicovitchMoran45}) before being extended to hold for
all sets (see Howroyd \cite{Howroyd95}).

\subsection{Homogeneity and the Assouad Dimension}\label{section - intro homogeneity}

The Assouad dimension is a less familiar notion of dimension, in which
we are concerned with `local' coverings of a set $F$: for more details
see Assouad \cite{Assouad}, Bouligand \cite{Bouligand}, Fraser \cite{Fraser2014}, Luukkainen
\cite{Luuk}, Olson \cite{EJO}, or Robinson \cite{JCR}.

\begin{definition}\label{definition - homogeneous}
  A set $F\subset X$ is $s$-homogeneous if for all $\delta_{0}>0$ there exists a constant
  $C>0$ such that
  \begin{equation*}
    \cover(B_{\delta}\left(x)\cap F,\rho\right)\leq C\left(\delta/\rho\right)^{s}\qquad \forall\;x\in F, \quad \text{for all}\ \delta,\rho\ \text{with}\ 0<\rho<\delta\leq \delta_{0}.
  \end{equation*}
\end{definition}

Note that we do not require a set to be totally bounded in order for it to be $s$-homogeneous.
\begin{definition}
  The Assouad dimension of a set $F\subset \R^{n}$ is defined by
 \[
    \dim_{\rm A} F := \inf\set{s\in\R^{+}:\ F\mbox{ is
        $s$-homogeneous}}
 \]
\end{definition}
It is known that for a totally bounded set $F\subset X$ the three notions of
dimension that we have now introduced satisfy
\begin{align}\label{dimension inequalities}
  \dim_{\rm LB}F\leq\dim_{\rm B}F\leq\dim_{\rm A}F
\end{align}
(see, for example, Lemma 9.6 in Robinson \cite{JCR} or Lemma 1.9 of \cite{HORS}).  An interesting
example is given by the compact countable set
$F_{\alpha}:=\set{n^{-\alpha}}_{n\in\mathbb{N}}\cup\set{0}\subset \R$ with
$\alpha>0$ for which
\begin{align*}
  \dim_{\rm LB}F_{\alpha}&=\dim_{\rm B}F_{\alpha}=\left(1+\alpha\right)^{-1}\\
  \text{but}\qquad\dim_{\rm A}F_{\alpha}&=1.
\end{align*}
(see Olson \cite{EJO} and Example 13.4 in Robinson \cite{JCRidds}).

\subsection{Product Sets}
Let $\left(X,\rmd_{X}\right)$ and $\left(Y,\rmd_{Y}\right)$ be metric spaces and endow the product space $X\times Y$ with a metric $\rmd_{X\times Y}$ that satisfies
\begin{align}\label{product metric condition}
m_{1}\max\left(\rmd_{X},\rmd_{Y}\right) \leq \rmd_{X\times Y} \leq m_{2} \max\left(\rmd_{X},\rmd_{Y}\right)
\end{align}
for some $m_{1},m_{2}>0$ with $m_{1}\leq m_{2}$. Clearly the familiar product metric
\begin{align*}
\rmd_{X\times Y,\infty}:&=\max\left(\rmd_{X},\rmd_{Y}\right)
\intertext{satisfies \eqref{product metric condition}, as do the metrics}
\rmd_{X\times Y,p}:&= \left(\rmd_{X}^{p}+\rmd_{Y}^{p}\right)^{\frac{1}{p}} &\text{for}\;p\in\left[1,\infty\right)
\end{align*}
with $m_{1}=1$ and $m_{2}=2^{\frac{1}{p}}$.

It is well known that if $F\subset X$ and
$G\subset Y$ are two totally bounded sets then the box-counting and
Assouad dimensions of their product $F\times G\subset X\times Y$ satisfy
\begin{align}
  \dim_{\rm LB}\left(F\times G\right) &\geq \dim_{\rm LB}F+\dim_{\rm LB}G\label{lower box product formula}\\
  \dim_{\rm B}\left(F\times G\right) &\leq \dim_{\rm B}F+\dim_{\rm B}G\label{upper box product formula}\\
  \text{and}\qquad\dim_{\rm A}\left(F\times G\right) &\leq
  \dim_{\rm A}F+\dim_{\rm A}G.\label{Assouad product formula}
\end{align}
provided that the product metric $\rmd_{X\times Y}$ satisfies \eqref{product metric condition}.

The box-counting dimension product formulae were improved in Robinson \& Sharples \cite{RobinsonSharples13RAEX}
who demonstrate that product sets satisfy the chain of
inequalities
\begin{multline}\label{product inequality chain}
  \dim_{\rm LB}F+\dim_{\rm LB}G\leq \dim_{\rm LB}\left(F\times G\right)\\
  \leq \min\set{\dim_{\rm LB}F+\dim_{\rm B}G,\dim_{\rm B}F+\dim_{\rm LB}G}\\
  \leq \max\set{\dim_{\rm LB}F+\dim_{\rm B}G,\dim_{\rm B}F+\dim_{\rm LB}G}\\
  \leq \dim_{\rm B}\left(F\times G\right)\leq \dim_{\rm B}F+\dim_{\rm B}G,
\end{multline}
and that paper provides a method for constructing sets so that their box-counting
dimensions can take arbitrary values satisfying this chain of
inequalities.

We remark that if $\dim_{\rm LB}F=\dim_{\rm B}F$ then it follows from \eqref{product inequality chain} that there is equality in \eqref{lower box product formula} and \eqref{upper box product formula}, so the good behaviour of just one set guarantees equality in the box-counting product formulas.

The box-counting dimension product formulae in \eqref{product inequality chain} are all consequences of the
geometric inequalities
\begin{align*}
  \cover\big(F\times G,m_{2}\delta\big)&\leq \cover(F,\delta)\cover(G,\delta)\\
  \text{and}\qquad \pack\left(F\times G,m_{1}\delta\right)&\geq
  \pack\left(F,\delta\right)\pack\left(G,\delta\right),
\end{align*}
which in turn follow from the inclusions
\begin{align}\label{product ball inclusions}
B_{\delta/m_{2}}\left(x\right)\times B_{\delta/m_{2}}\left(y\right) \subset B_{\delta}\left(\left(x,y\right)\right) \subset B_{\delta/m_{1}}\left(x\right)\times B_{\delta/m_{1}}\left(y\right),
\end{align}
as the product of $\delta$-ball covers of $F$ and $G$ gives rise to an $m_{2}\delta$-ball cover of $F\times G$, and the product of disjoint $\delta$-balls with centres in $F$ and $G$ gives rise to a set of disjoint $m_{1}\delta$-balls with centres in $F\times G$ (see, for example, Falconer \cite{BkFalconer03} or Robinson \& Sharples
\cite{RobinsonSharples13RAEX} for further details).
Combining these product inequalities with the relationships in \eqref{geometric inequalities} we obtain the expression
\begin{align}\label{product geometry inequality in N}
  \cover(F,4\delta/m_{1})\cover(G,4\delta/m_{1})\leq \cover(F\times G,\delta)\leq
  \cover(F,\delta/m_{2})\cover(G,\delta/m_{2})
\end{align}
which will be useful in the remainder.

To establish the Assouad dimension product inequality we prove the
following geometric relationship. One can find a very similar argument
for both bounds in Olson \cite{EJO} (Theorem 3.2) and for the upper
bound in Robinson \cite{JCR} (Lemma 9.7).

\begin{lemma}\label{lemma - Assouad product geometry}
  If $F\subset X$ and $G\subset Y$ then for all
  $\vect{x}=\left(x,y\right)\in F\times G$ and all $\delta,\rho>0$
  \begin{align*}
    \cover\left(B_{\delta}(\vect{x})\cap \left(F\times
        G\right),\rho\right)&\leq \cover\left(B_{\delta/m_{1}}(x)\cap
      F,\rho/m_{2}\right)\cover\left(B_{\delta/m_{1}}(y)\cap
      G,\rho/m_{2}\right) \intertext{and}
    \cover\left(B_{\delta}(\vect{x})\cap \left(F\times
        G\right),\rho\right)&\geq \cover\left(B_{\delta/m_{2}}(x)\cap
      F,4\rho/m_{1}\right)\cover\left(B_{\delta/m_{2}}(y)\cap
      G,4\rho/m_{1}\right).
  \end{align*}
\end{lemma}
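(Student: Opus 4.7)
The plan is to deduce both inequalities as immediate consequences of the ball inclusions \eqref{product ball inclusions} combined with monotonicity of $\cover(\cdot,\rho)$ and the product geometry inequality \eqref{product geometry inequality in N} already derived in the excerpt. The key observation is that intersecting the product ball inclusions with $F\times G$ yields the set inclusions
\[
\bigl(B_{\delta/m_{2}}(x)\cap F\bigr)\times\bigl(B_{\delta/m_{2}}(y)\cap G\bigr)\;\subset\; B_{\delta}(\vect{x})\cap(F\times G)\;\subset\;\bigl(B_{\delta/m_{1}}(x)\cap F\bigr)\times\bigl(B_{\delta/m_{1}}(y)\cap G\bigr),
\]
so the problem is reduced to comparing $\cover(\rho\text{-cover of a product})$ with the product of the $\cover$ of the factors, which is exactly what \eqref{product geometry inequality in N} does.

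For the upper bound, I would apply monotonicity of $\cover(\cdot,\rho)$ to the right-hand inclusion to obtain
\[
\cover\bigl(B_{\delta}(\vect{x})\cap(F\times G),\rho\bigr)\;\leq\;\cover\Bigl(\bigl(B_{\delta/m_{1}}(x)\cap F\bigr)\times\bigl(B_{\delta/m_{1}}(y)\cap G\bigr),\rho\Bigr),
\]
and then apply the right-hand inequality of \eqref{product geometry inequality in N} (with $\delta$ there replaced by $\rho$) to the product on the right, yielding the claimed bound with $\rho/m_{2}$-covers of each factor.

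For the lower bound, I would apply monotonicity to the left-hand inclusion to obtain
\[
\cover\bigl(B_{\delta}(\vect{x})\cap(F\times G),\rho\bigr)\;\geq\;\cover\Bigl(\bigl(B_{\delta/m_{2}}(x)\cap F\bigr)\times\bigl(B_{\delta/m_{2}}(y)\cap G\bigr),\rho\Bigr),
\]
and then apply the left-hand inequality of \eqref{product geometry inequality in N} (again with $\delta$ replaced by $\rho$) to bound the right-hand side below by the product of $\cover\bigl(\cdot,4\rho/m_{1}\bigr)$ of the two factors.

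Neither step presents a real obstacle: the only thing to verify carefully is that the substitution into \eqref{product geometry inequality in N} is legitimate, i.e.\ that the inequality there was proved for arbitrary sets rather than only for $F$ and $G$ themselves. Since its derivation rested purely on \eqref{product ball inclusions} and \eqref{geometric inequalities}, which hold for any subsets of $X$ and $Y$, this is immediate. No attaining hypothesis, total boundedness, or lower bound on $\rho$ relative to $\delta$ is needed, so the lemma holds for all $\delta,\rho>0$ as stated.
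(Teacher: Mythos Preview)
Your proposal is correct and follows essentially the same approach as the paper: establish the set inclusions by intersecting \eqref{product ball inclusions} with $F\times G$, then apply monotonicity of $\cover(\cdot,\rho)$ together with the two halves of \eqref{product geometry inequality in N}. Your explicit remark that \eqref{product geometry inequality in N} holds for arbitrary subsets (not just $F$ and $G$) is a useful clarification that the paper leaves implicit.
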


\begin{proof}
  From \eqref{product ball inclusions} it follows that
  \begin{align*}
    B_{\delta}(\vect{x})\cap(F\times G)&\subset \left(B_{\delta/m_{1}}(x)\cap F\right)\times\left(B_{\delta/m_{1}}(y)\cap G\right)\\
    \text{and}\qquad B_{\delta}(\vect{x})\cap(F\times G)&\supset \left(B_{\delta/m_{2}}(x)\cap F\right)\times\left( B_{\delta/m_{2}}(y)\cap G\right).
  \end{align*}
Consequently, as the function $\cover\left(\cdot,\rho\right)$ is monotonic, it follows from \eqref{product geometry inequality in N} that
\begin{align*}
\cover\left(B_{\delta}(\vect{x})\cap \left(F\times
        G\right),\rho\right)&\leq \cover\left(\left(B_{\delta/m_{1}}(x)\cap F\right)\times\left(B_{\delta/m_{1}}(y)\cap G\right),\rho\right)\\
        &\leq \cover\left(B_{\delta/m_{1}}(x)\cap F,\rho/m_{2}\right)\cover\left(B_{\delta/m_{1}}(y)\cap G,\rho/m_{2}\right),
        \intertext{and}
\cover\left(B_{\delta}(\vect{x})\cap \left(F\times G\right),\rho\right)&\geq \cover\left(\left(B_{\delta/m_{2}}(x)\cap F\right)\times\left( B_{\delta/m_{2}}(y)\cap G\right),\rho\right)\\
&\geq \cover\left(B_{\delta/m_{2}}(x)\cap F,4\rho/m_{1}\right)\cover\left(B_{\delta/m_{2}}(y)\cap G,4\rho/m_{1}\right)
\end{align*}
as required.
\end{proof}

It is now simple to prove the following Assouad dimension formula for
products. We remark that in Olson \cite{EJO}, Theorem 3.2, it was
mistakenly asserted that equality holds in this product
formula. However, the argument there (which we reproduce here) shows that equality \emph{does} hold for
products of the form $F\times F$.

\begin{lemma}\label{Adimprod}
  If $F\subset X$ and $G\subset Y$ then
\begin{equation}\label{Adimprodeq}
  \dim_A(F\times G)\le\dim_A F+\dim_A G
\end{equation}
  and
  \begin{equation}\label{dAFF}
    \dim_A(F\times F)=2\dim_A F.
  \end{equation}
\end{lemma}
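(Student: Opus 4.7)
The plan is to derive both parts of the lemma directly from the two-sided estimate in Lemma \ref{lemma - Assouad product geometry}: the upper bound yields the product inequality \eqref{Adimprodeq}, and the lower bound, specialised to the diagonal of $F\times F$, yields the matching lower bound in \eqref{dAFF}.

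For \eqref{Adimprodeq}, I would fix arbitrary $s>\dim_{\rm A} F$ and $t>\dim_{\rm A} G$, so that by definition of the Assouad dimension $F$ is $s$-homogeneous and $G$ is $t$-homogeneous, with constants $C_F,C_G$ valid on some threshold $\delta_0>0$. Given any $(x,y)\in F\times G$ and any $0<\rho<\delta$ (with $\delta$ below a suitable rescaling of $\delta_0$), the upper estimate of Lemma \ref{lemma - Assouad product geometry} gives
\[
\cover\left(B_{\delta}(x,y)\cap(F\times G),\rho\right)\leq \cover\left(B_{\delta/m_1}(x)\cap F,\rho/m_2\right)\cover\left(B_{\delta/m_1}(y)\cap G,\rho/m_2\right),
\]
and applying the homogeneity hypothesis to each factor bounds this by $C_F C_G\,(m_2/m_1)^{s+t}(\delta/\rho)^{s+t}$. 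Hence $F\times G$ is $(s+t)$-homogeneous, and letting $s\downarrow \dim_{\rm A} F$ and $t\downarrow \dim_{\rm A} G$ gives the product inequality.

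For \eqref{dAFF}, the upper bound $\dim_{\rm A}(F\times F)\leq 2\dim_{\rm A} F$ is a special case of what has just been shown. For the reverse inequality, I would fix any $s>\dim_{\rm A}(F\times F)$, so $F\times F$ is $s$-homogeneous, and apply the \emph{lower} estimate of Lemma \ref{lemma - Assouad product geometry} at the diagonal point $(x,x)$ with $x\in F$: since the two factors on the right-hand side are identical, a perfect square appears, and
\[
C(\delta/\rho)^{s}\geq \cover\left(B_{\delta}(x,x)\cap(F\times F),\rho\right)\geq \cover\left(B_{\delta/m_2}(x)\cap F,4\rho/m_1\right)^{2}.
\]
Taking square roots and changing variables to $u=\delta/m_2$, $v=4\rho/m_1$ shows that $\cover(B_u(x)\cap F,v)\leq \sqrt{C}\,(4m_2/m_1)^{s/2}(u/v)^{s/2}$, i.e.\ $F$ is $s/2$-homogeneous. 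Thus $\dim_{\rm A} F\leq s/2$ for every admissible $s$, so $2\dim_{\rm A} F\leq \dim_{\rm A}(F\times F)$, completing the equality.

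The only real obstacle is bookkeeping: the $m_1,m_2$ factors shift the arguments of $\cover$ on both the ball radius and the resolution, so one must verify that the homogeneity hypothesis remains applicable at the rescaled parameters by choosing the threshold $\delta_0$ in Definition \ref{definition - homogeneous} small enough. The essential mathematical content is simply the upper bound of Lemma \ref{lemma - Assouad product geometry} and the diagonal-plus-square-root trick for the lower bound; no further ingredients are needed.
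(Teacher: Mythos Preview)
Your proof is correct and uses the same ingredients as the paper: the upper bound of Lemma~\ref{lemma - Assouad product geometry} for \eqref{Adimprodeq}, and the lower bound at the diagonal point together with the squaring/square-root observation for \eqref{dAFF}. The only organisational difference is in the second part: the paper argues ``upwards'', fixing $\epsilon>0$, exhibiting a point $x\in F$ and scales $0<\rho<\delta$ at which $\cover(B_\delta(x)\cap F,\rho)\ge C(\delta/\rho)^{\dim_{\rm A}F-\epsilon}$, and then squaring to obtain a lower bound on $\dim_{\rm A}(F\times F)$; you argue the contrapositive ``downwards'', assuming $F\times F$ is $s$-homogeneous and taking square roots to deduce that $F$ is $s/2$-homogeneous. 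Both are equally valid and equally short; your version has the minor advantage of avoiding the slightly informal ``find $x$ and a constant $C$'' step in the paper.
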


\begin{proof}
 Fix $\delta_{0}>0$. If $F$ is an $s$-homogeneous set and $G$ is a $t$-homogeneous set
  then from Lemma \ref{lemma - Assouad product geometry} it follows that for
  all $\delta,\rho$ with $0<\rho<\delta\leq\delta_{0}$
  \begin{align*}
    \cover(B_{\delta}\left(\vect{x})\cap \left(F\times
        G\right),\rho\right)&\leq \cover\left(B_{\delta/m_{1}}(x)\cap
      F,\rho/m_{2}\right)\cover\left(B_{\delta}(y)\cap
      G,\rho/m_{2}\right). \intertext{Therefore, since the sets $F$ and $G$
      are homogeneous and $0<\rho/m_{2}<\delta/m_{1}\leq \delta_{0}/m_{1}$, there exist constants $C_{F},C_{G}>0$ so that}
    &\leq C_{F}C_{G}\left(\frac{\delta/m_{1}}{\rho/m_{2}}\right)^{s}\left(\frac{\delta/m_{1}}{\rho/m_{2}}\right)^{t}\\
    &\leq C_{F}C_{G}\left(m_{2}/m_{1}\right)^{s+t}\left(\delta/\rho\right)^{s+t}.
  \end{align*}
  As $\delta_{0}>0$ was arbitrary we conclude that the set $F\times G$ is
  $\left(s+t\right)$-homogeneous, from which we obtain \eqref{Adimprodeq}.

  Now suppose that $F=G$. Given $\epsilon>0$, find $x\in F$ such that
\[
\cover(B_\delta(x)\cap F,\rho)\ge C(\delta/\rho)^{s-\epsilon}
\]
for some $0<\rho<\delta$. Then for $\vect{x}=(x,x)\in F\times
F$ we have
\begin{align*}
  \cover\left(B_{m_{2}\delta}(\vect{x})\cap \left(F\times
      F\right),m_{1}\rho/4\right)
  &\geq \cover \left(B_\delta(x)\cap F,\rho\right)\cover\left(B_\delta(x)\cap G,\rho\right)\\
  &\geq C^2(\delta/\rho)^{2(s-\epsilon)};
\end{align*}
it follows that $\dim_A(F\times F)\ge 2(s-\epsilon)$ for every
$\epsilon>0$, which yields (\ref{dAFF}).
\end{proof}

\section{Equi-homogeneous sets}
From Definition \ref{definition - homogeneous} we see that homogeneity
encodes the \textit{maximum} size of a local optimal cover at a
particular length-scale. However, the \textit{minimal} size of a local
optimal cover is not captured by homogeneity, and indeed this minimum
size can scale very differently, as the set described in Section \ref{section - intro homogeneity} illustrates.
\begin{example}
  For each $\alpha>0$ the set $F_{\alpha}:=
  \set{n^{-\alpha}}_{n\in\mathbb{N}}\cup\set{0}$ has Assouad dimension
  equal to $1$, so for all $\varepsilon>0$
  \[
  \sup_{x\in F_{\alpha}} \cover(B_{\delta}(x)\cap
    F_{\alpha},\rho)
  (\delta/\rho)^{-(1-\varepsilon)}
  \]
  is unbounded on $\delta,\rho$ with $0<\rho<\delta$.

  On the other hand $1\in F_{\alpha}$ is an isolated point so
  \[
  \inf_{x\in F_{\alpha}} \cover(B_{\delta}(x)\cap
    F_{\alpha},\rho) = 1
  \]
  for all $\delta,\rho$ with $0<\rho<\delta<1-2^{-\alpha}$ as
  $B_{\delta}(1)\cap F_{\alpha}=\set{1}$ for such $\delta$
  and this isolated point can be covered by a single ball of any
  radius.
\end{example}

For a totally bounded set the maximal and minimal sizes of local
optimal covers can be estimated by more elementary quantities.

\begin{lemma}\label{lemma - local cover estimates}
  For a totally bounded set $F\subset X$ and $\delta,\rho$ satisfying
  $0<\rho<\delta$
  \begin{align}
    \inf_{x\in F} \cover(B_{\delta}(x)\cap F,\rho)
	&\leq \frac{\cover(F,\rho)}{\cover(F,4\delta)}
		\label{delta rho ratio lower bound}\\
    \text{and}\qquad\sup_{x\in F} \cover(B_{\delta}(x)\cap
      F,\rho) &\geq \frac{\cover(F,\rho)}{\cover(F,\delta)}.\label{delta
      rho ratio upper bound}
  \end{align}
\end{lemma}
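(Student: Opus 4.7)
The plan is to prove each inequality by a counting argument relating a minimum $\rho$-cover of $F$ to the local covers that appear on the left-hand sides.

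For \eqref{delta rho ratio lower bound} I would deploy a packing-and-averaging argument. Fix a maximal packing $\{B_{2\delta}(x_k)\}_{k=1}^P$ of $F$ by $2\delta$-balls with centres $x_k\in F$, so that $P=\pack(F,2\delta)$ and the centres are pairwise separated by more than $4\delta$. The key geometric observation---the engine producing the factor of $4$---is that, when $\rho<\delta$, no single $\rho$-ball can meet two of the concentric inner balls $B_\delta(x_k)$: if $B_\rho(z)$ met both $B_\delta(x_k)$ and $B_\delta(x_l)$ then $\rmd_X(x_k,x_l)\leq 2\rho+2\delta<4\delta$, contradicting the separation. Now take a minimum $\rho$-cover $\{B_\rho(z_j)\}_{j=1}^{\cover(F,\rho)}$ of $F$; for each $k$ the sub-collection of $z_j$ whose ball meets $B_\delta(x_k)$ is itself a $\rho$-cover of $B_\delta(x_k)\cap F$, and by the dichotomy above these sub-collections are pairwise disjoint in $j$, so
\[
\sum_{k=1}^{P}\cover(B_\delta(x_k)\cap F,\rho)\leq \cover(F,\rho).
\]
By pigeonhole, some $k$ achieves $\cover(B_\delta(x_k)\cap F,\rho)\leq \cover(F,\rho)/P$, and since $x_k\in F$ and $P=\pack(F,2\delta)\geq \cover(F,4\delta)$ by \eqref{geometric inequalities} (with the parameter there taken as $2\delta$), this delivers \eqref{delta rho ratio lower bound}.

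For \eqref{delta rho ratio upper bound} the argument is dual and more elementary: starting from a minimum $\delta$-cover $\{B_\delta(x_i)\}_{i=1}^{\cover(F,\delta)}$ of $F$, subadditivity of $\cover(\cdot,\rho)$ immediately gives $\cover(F,\rho)\leq\sum_{i}\cover(B_\delta(x_i)\cap F,\rho)$. Discarding any empty intersections and choosing $y_i\in B_\delta(x_i)\cap F$ for the rest, each term is controlled by $\sup_{x\in F}\cover(B_\delta(x)\cap F,\rho)$ via the inclusion $B_\delta(x_i)\cap F\subset B_{2\delta}(y_i)\cap F$, and rearranging produces \eqref{delta rho ratio upper bound}.

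The main obstacle is the $\rho$-ball dichotomy used in the first inequality: verifying that a $\rho$-ball with $\rho<\delta$ cannot straddle two packing cells is the linchpin that forces the passage through $\pack(F,2\delta)$ and the appearance of the $4\delta$ on the right of \eqref{delta rho ratio lower bound}. The second inequality reduces to subadditivity combined with shifting the centres of the minimum $\delta$-cover into $F$ at the cost of a controlled constant.
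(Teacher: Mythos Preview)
Your argument for \eqref{delta rho ratio lower bound} is essentially the paper's: both take a packing with centres in $F$, show that no $\rho$-ball can meet two of the inner $\delta$-balls, and then sum. You use a maximal $2\delta$-packing and finish with $\pack(F,2\delta)\ge\cover(F,4\delta)$ from \eqref{geometric inequalities}; the paper instead starts from a $4\delta$-packing. One small imprecision: disjointness of closed $2\delta$-balls in a general metric space does \emph{not} force $\rmd_X(x_k,x_l)>4\delta$, so ``contradicting the separation'' is not quite the right justification. The dichotomy is nevertheless correct, because if $B_\rho(z)$ meets both $B_\delta(x_k)$ and $B_\delta(x_l)$ then $\rmd_X(z,x_k)\le\rho+\delta<2\delta$ and likewise for $x_l$, placing $z\in B_{2\delta}(x_k)\cap B_{2\delta}(x_l)$ directly.

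There is, however, a genuine gap in your argument for \eqref{delta rho ratio upper bound}. After shifting the centre $x_i$ to $y_i\in B_\delta(x_i)\cap F$ you only obtain $B_\delta(x_i)\cap F\subset B_{2\delta}(y_i)\cap F$, and hence
\[
\cover(B_\delta(x_i)\cap F,\rho)\le\cover(B_{2\delta}(y_i)\cap F,\rho)\le\sup_{x\in F}\cover(B_{2\delta}(x)\cap F,\rho),
\]
a bound by the supremum over balls of radius $2\delta$, not $\delta$. Summing and rearranging therefore yields only $\sup_{x\in F}\cover(B_{2\delta}(x)\cap F,\rho)\ge\cover(F,\rho)/\cover(F,\delta)$, strictly weaker than \eqref{delta rho ratio upper bound}; the ``controlled constant'' you allude to cannot be absorbed, since the lemma is stated without one. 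The paper avoids this entirely by taking the centres $x_1,\ldots,x_{\cover(F,\delta)}$ of the minimal $\delta$-cover to lie in $F$ from the outset (consistent with how it uses $\cover$ throughout), so that each term $\cover(B_\delta(x_i)\cap F,\rho)$ is already dominated by $\sup_{x\in F}\cover(B_\delta(x)\cap F,\rho)$ and no shift is needed.
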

\begin{proof}
  Let $x_{1},\ldots,x_{\cover(F,\delta)}\in F$ be the centers of
  $\delta$-balls that form a cover of $F$. Clearly,
  \begin{align*}
    \cover(F,\rho)&\leq \sum_{j=1}^{\cover(F,\delta)}
    \cover(B_{\delta}(x_{j})\cap F,\rho) \leq
    \cover(F,\delta) \sup_{x\in F}
    \cover(B_{\delta}(x)\cap F,\rho),
  \end{align*}
  which is \eqref{delta rho ratio upper bound}.

  Next, let $\delta,\rho$ satisfy $0<\rho<\delta$ and let
  $x_{1},\ldots,x_{\pack(F,4\delta)}\in F$ be the
  centers of disjoint $4\delta$-balls. Observe that an arbitrary
  $\rho$-ball $B_{\rho}(z)$ intersects at most one of the
  balls $B_{\delta}(x_{i})$: indeed, if there exist $x,y\in
  B_{\rho}(z)$ with $x\in B_{\delta}(x_{i})$ and
  $y\in B_{\delta}(x_{j})$ with $i\neq j$ then
  \[
   d_{X}(x_{i},x_{j})\leq
   d_{X}(x_{i},x)+ d_{X}(x,z)
		+ d_{X}(z,y)+ d_{X}(y,x_{j})\leq
  2\delta + 2\rho \leq 4\delta
  \]
  and so $x_{i}\in B_{4\delta}(x_{j})$, which is a
  contradiction. Consequently, as $F$ contains the union
  $\bigcup_{j=1}^{\pack(F,4\delta)}
  B_{\delta}(x_{j})\cap F$, it follows that
  \begin{align*}
    \cover(F,\rho)&\geq
	\sum_{j=1}^{\pack(F,4\delta)}
		\cover(B_{\delta}(x_{j})\cap F,\rho)\\
    &\geq \pack(F,4\delta)
		\inf_{x\in F}\cover(B_{\delta}(x)\cap F,\rho),\\
    &\geq \cover(F,4\delta) \inf_{x\in
      F}\cover(B_{\delta}(x)\cap F,\rho)
  \end{align*}
  from \eqref{geometric inequalities}, which is precisely \eqref{delta
    rho ratio lower bound}.
\end{proof}

In contrast there is, in general, no similar elementary upper bound on the quantity $\sup_{x\in F}\cover(B_{\delta}\left(x\right)\cap F,\rho)$, the existence of which would be useful in determining the Assouad dimension of $F$. For this reason we introduce the notion of equi-homogeneity. A set is equi-homogeneous if the range of the number of sets required in the local covers is uniformly bounded at all length-scales.

\begin{definition}\label{equihom}
  We say that a set $F\subset X$ is \emph{equi-homogeneous} if for all $\delta_{0}>0$ there exist constants $M\geq 1$, and $c_{1},c_{2}>0$ such that
  \begin{align}\label{equihom equation}
    \sup_{x\in F} \cover\left(B_{\delta}(x)\cap F,\rho\right)&\leq M
    \inf_{x\in F} \cover\left(B_{c_{1}\delta}(x)\cap F,c_{2}\rho\right)
  \end{align}
  for all $\delta,\rho$ with $0<\rho<\delta\leq \delta_{0}$.
\end{definition}
As with the definition of the box-counting dimensions, it follows from the geometric inequalities \eqref{geometric inequalities} that replacing $\cover$ with the geometric quantities $\pack$ or $\diamcover$ gives an equivalent definition of equi-homogeneity. Further, note that
as $\cover(B_{\delta}\left(x\right)\cap F,\rho)$ increases with $\delta$ and decreases with $\rho$, by replacing the $c_{i}$ with $1$ if necessary we can assume that $c_{2}\leq 1\leq c_{1}$ in \eqref{equihom equation}.
If a totally bounded set $F$ is equi-homogeneous then, in addition to the lower bound \eqref{delta rho ratio upper bound}, we can find an upper bound for the maximal size of the local coverings.
\begin{corollary}\label{corollary - supremum bounds}
If $F\subset X$ is totally bounded and equi-homogeneous then for all $\delta_{0}>0$ there exist constants $M\geq 1$ and $c_{1},c_{2}>0$ with $c_{2}\leq 1\leq c_{1}$ such that
\begin{equation*}
\frac{\cover(F,\rho)}{\cover(F,\delta)}\leq \sup_{x\in F} \cover\left(B_{\delta}\left(x\right)\cap F,\rho\right) \leq M\inf_{x\in F} \cover\left(B_{c_{1}\delta}\left(x\right)\cap F,c_{2}\rho\right) \leq M\frac{\cover\left(F,c_{2}\rho\right)}{\cover\left(F,4c_{1}\delta\right)}
\end{equation*}
for all $0<\rho<\delta\leq \delta_{0}$.
\end{corollary}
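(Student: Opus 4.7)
The statement is essentially a direct chaining of the three ingredients already at hand, so the plan is just to stitch them together carefully, keeping track of constants.

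First I would fix $\delta_0 > 0$ and invoke Definition \ref{equihom} to obtain constants $M \geq 1$ and $c_1, c_2 > 0$ (which, by the remark immediately following the definition, we may assume satisfy $c_2 \leq 1 \leq c_1$) such that the equi-homogeneity inequality \eqref{equihom equation} holds on the range $0 < \rho < \delta \leq \delta_0$. This immediately gives the middle inequality
\[
\sup_{x\in F} \cover\left(B_{\delta}(x)\cap F,\rho\right) \leq M\inf_{x\in F} \cover\left(B_{c_{1}\delta}(x)\cap F,c_{2}\rho\right).
\]

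Next, the leftmost inequality is exactly \eqref{delta rho ratio upper bound} from Lemma \ref{lemma - local cover estimates}, which requires only $0 < \rho < \delta$, a condition we already have.

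For the rightmost inequality I would apply \eqref{delta rho ratio lower bound} from the same Lemma, but with $\delta$ and $\rho$ replaced by $c_1 \delta$ and $c_2 \rho$ respectively. The Lemma's hypothesis $0 < c_2\rho < c_1 \delta$ is satisfied since $c_2 \leq 1 \leq c_1$ and $\rho < \delta$, giving
\[
\inf_{x\in F} \cover\left(B_{c_1\delta}(x)\cap F, c_2\rho\right) \leq \frac{\cover(F, c_2\rho)}{\cover(F, 4c_1\delta)}.
\]
Multiplying through by $M$ and combining the three inequalities yields the claim for all $0 < \rho < \delta \leq \delta_0$.

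I do not foresee any genuine obstacle: every inequality is either a restatement of the defining property of equi-homogeneity or a specialisation of Lemma \ref{lemma - local cover estimates}, and the only point worth flagging is the bookkeeping that ensures $c_2\rho < c_1\delta$ so that Lemma \ref{lemma - local cover estimates} legitimately applies in the last step — which is precisely why the normalisation $c_2 \leq 1 \leq c_1$ was noted before the statement of the corollary.
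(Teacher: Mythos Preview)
Your proposal is correct and follows essentially the same approach as the paper: fix $\delta_0$, normalise $c_2\le 1\le c_1$, read off the middle inequality from equi-homogeneity, the left inequality from \eqref{delta rho ratio upper bound}, and the right inequality from \eqref{delta rho ratio lower bound} applied at scales $c_1\delta$ and $c_2\rho$, using $c_2\rho<c_1\delta$ to justify that application. The paper's proof is identical in structure and detail.
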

\begin{proof}
Fix $\delta_{0}>0$. Assuming without loss of generality that \eqref{equihom equation} holds with $c_{2}\leq 1 \leq c_{1}$ it is clear that $\rho<\delta$ implies $c_{2}\rho\leq c_{1}\delta$. Consequently, it follows from \eqref{delta rho ratio lower bound} that
\[
\inf_{x\in F}\cover\left(B_{c_{1}\delta}\left(x\right)\cap F,c_{2}\rho\right)\leq \frac{\cover\left(F,c_{2}\rho\right)}{\cover\left(F,4c_{1}\delta\right)}
\]
for all $\delta,\rho$ with $0<\rho<\delta\leq \delta_{0}$.
The remaining inequalities are immediate from the definition of equi-homogeneity and Lemma \ref{lemma - local cover estimates}.
\end{proof}

In fact, with this bound we can precisely find the Assouad dimension of
equi-homogeneous sets provided that their box-counting dimensions are
suitably `well behaved'.

\begin{theorem}\label{theorem - equi-homogeneous Assouad equal to box}
  If a set $F\subset X$ is equi-homogeneous, attains
  both its upper and lower box-counting dimensions in the sense of (\ref{attains}), and
  $\dim_{\rm LB}F=\dim_{\rm B}F$, then 
  \[
  \dim_{\rm A}F=\dim_{\rm B}F=\dim_{\rm LB}F.
  \]
\end{theorem}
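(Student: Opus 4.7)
The plan is to read off the result directly from Corollary \ref{corollary - supremum bounds} by substituting the attainment bounds for $\cover(F,\cdot)$, and then combining with the known inequality $\dim_{\rm B}F\leq\dim_{\rm A}F$ from \eqref{dimension inequalities}. Write $d := \dim_{\rm LB}F = \dim_{\rm B}F$.

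First I would fix an arbitrary $\delta_{0}>0$ (since $s$-homogeneity must be verified at every scale). Equi-homogeneity, via Corollary \ref{corollary - supremum bounds}, supplies constants $M\geq 1$ and $c_{1},c_{2}>0$ with $c_{2}\leq 1\leq c_{1}$ such that
\[
\sup_{x\in F}\cover\left(B_{\delta}(x)\cap F,\rho\right) \leq M\,\frac{\cover(F,c_{2}\rho)}{\cover(F,4c_{1}\delta)}
\]
for all $0<\rho<\delta\leq\delta_{0}$. Next I would invoke the attainment hypothesis \eqref{attains} at the enlarged scale $4c_{1}\delta_{0}$: since both dimensions agree and equal $d$, there is a single constant $C\geq 1$ with $C^{-1}\eta^{-d}\leq\cover(F,\eta)\leq C\eta^{-d}$ for all $0<\eta\leq 4c_{1}\delta_{0}$. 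Because $c_{2}\leq 1\leq c_{1}$, both $c_{2}\rho$ and $4c_{1}\delta$ lie in this range whenever $0<\rho<\delta\leq\delta_{0}$, so both estimates are simultaneously available.

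Substituting gives
\[
\sup_{x\in F}\cover\left(B_{\delta}(x)\cap F,\rho\right) \leq M\,\frac{C(c_{2}\rho)^{-d}}{C^{-1}(4c_{1}\delta)^{-d}} = MC^{2}(4c_{1}/c_{2})^{d}\left(\delta/\rho\right)^{d},
\]
which, since $\delta_{0}>0$ was arbitrary, shows that $F$ is $d$-homogeneous and hence $\dim_{\rm A}F\leq d$. Combined with \eqref{dimension inequalities}, this yields $\dim_{\rm A}F=\dim_{\rm B}F=\dim_{\rm LB}F$.

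There is no real obstacle here; the content of the theorem is entirely packaged into Corollary \ref{corollary - supremum bounds} and the attainment definition. The one point requiring mild care is the bookkeeping of scales: the attainment constants must be chosen at a scale large enough ($4c_{1}\delta_{0}$) to accommodate the dilations $c_{2}\rho$ and $4c_{1}\delta$ appearing inside $\cover(F,\cdot)$, which is possible precisely because \eqref{attains} holds for every $\delta_{0}$.
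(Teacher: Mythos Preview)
Your proof is correct and is essentially the same argument as the paper's: apply Corollary \ref{corollary - supremum bounds}, substitute the two-sided attainment bound \eqref{attains}, and conclude $d$-homogeneity, then invoke \eqref{dimension inequalities}. If anything, your explicit enlargement of the attainment scale to $4c_{1}\delta_{0}$ is a shade more careful than the paper's own presentation, which applies the attainment bound at $4c_{1}\delta$ without adjusting the range.
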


\begin{proof}
Fix $\delta_{0}>0$. As $F$ attains both its upper and lower box-counting dimensions and
  these dimensions are equal it is clear from
  \eqref{attains} that there exists a constant $C\geq 1$ such that
  \begin{align}
    \frac{1}{C}\delta^{-\dim_{\rm B}F}&\leq \cover(F,\delta)\leq
    C\delta^{-\dim_{\rm B}{F}} &&\text{for all}\;0<\delta\leq\delta_0\label{tight
      control}.
\intertext{Next, as $F$ is equi-homogeneous it follows from Corollary \ref{corollary - supremum bounds} that}
\sup_{x\in F}\cover\left(B_{\delta}\left(x\right)\cap F,\rho\right)&\leq M \frac{\cover\left(F,c_{2}\rho\right)}{\cover\left(F,4c_{1}\delta\right)} & &\text{for all}\;0<\rho<\delta\leq \delta_{0} \notag
\intertext{for some constants $M\geq 1$ and $c_{1},c_{2}>0$, which from \eqref{tight control}}
&\leq MC^{2}\frac{(c_{2}\rho)^{-\dim_{\rm B}F}}
		{(4c_{1}\delta)^{-\dim_{\rm B}F}}\notag\\
    &=MC^{2}(4c_{1}/c_{2})^{\dim_{\rm B}F}
		(\delta/\rho)^{\dim_{\rm B}F} \notag
  \end{align}
  for all $\delta,\rho$ with $0<\rho<\delta\leq\delta_{0}$, so the set $F$ is
  $(\dim_{\rm B}F)$-homogeneous. Consequently, $\dim_{\rm A}F\leq
  \dim_{\rm B}F$, but from \eqref{dimension inequalities} the Assouad
  dimension dominates the upper box-counting dimension so we obtain
  the equality $\dim_{\rm A}F=\dim_{\rm B}F$.
\end{proof}
The generalised Cantor sets introduced in the next section are the prototypical examples of equi-homogeneous sets, and it is precisely these sets that we use to construct examples of strict inequality in the Assouad dimension product formula.
In this construction we will determine the Assouad dimension of the product of generalised Cantor sets by applying the above theorem, which first requires us to show that the product set is equi-homogeneous. However, this immediately follows from the fact that equi-homogeneity is preserved upon taking products, which we now prove.

\begin{lemma}\label{lemma - equi-homogeneous products}
If $F\subset X$ and $G\subset Y$ are equi-homogeneous and the product space $X\times Y$ is endowed with a metric satisfying \eqref{product metric condition},
  then the product $F\times G\subset X\times Y$ is equi-homogeneous.
\end{lemma}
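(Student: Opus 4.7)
The plan is to sandwich both the supremum and infimum of local cover numbers of $F\times G$ between products of the corresponding quantities for $F$ and $G$, using Lemma \ref{lemma - Assouad product geometry}, and then invoke the equi-homogeneity of each factor individually to pass from sup to inf.

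Fix $\delta_{0}>0$. By the equi-homogeneity of $F$ (resp.\ $G$) applied at the threshold $\delta_0/m_1$, there exist constants $M_F\ge 1$, $0<a_2\le 1\le a_1$ (resp.\ $M_G\ge 1$, $0<b_2\le 1\le b_1$) such that $\sup_{x\in F}\cover(B_{\delta'}(x)\cap F,\rho') \le M_F\inf_{x\in F}\cover(B_{a_1\delta'}(x)\cap F,a_2\rho')$ for all $0<\rho'<\delta'\le\delta_0/m_1$, and analogously for $G$. For $0<\rho<\delta\le\delta_0$ the parameters $\delta'=\delta/m_1$ and $\rho'=\rho/m_2$ satisfy $0<\rho'<\delta'\le\delta_0/m_1$ (using $m_1\le m_2$), so the upper bound from Lemma \ref{lemma - Assouad product geometry} combined with equi-homogeneity of $F$ and $G$ yields
\begin{align*}
\sup_{\vect{x}}\cover(B_\delta(\vect{x})\cap(F\times G),\rho)
&\le \sup_{x\in F}\cover(B_{\delta/m_1}(x)\cap F,\rho/m_2)\cdot\sup_{y\in G}\cover(B_{\delta/m_1}(y)\cap G,\rho/m_2)\\
&\le M_FM_G\,\inf_{x\in F}\cover(B_{a_1\delta/m_1}(x)\cap F,a_2\rho/m_2)\cdot\inf_{y\in G}\cover(B_{b_1\delta/m_1}(y)\cap G,b_2\rho/m_2).
\end{align*}

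I would then set $c_1:=(m_2/m_1)\max(a_1,b_1)$ and $c_2:=(m_1/(4m_2))\min(a_2,b_2)$, so that $c_1\delta/m_2\ge\max(a_1,b_1)\,\delta/m_1$ and $4c_2\rho/m_1\le\min(a_2,b_2)\,\rho/m_2$. Monotonicity of $\cover$ (a bigger ball and a smaller cover radius only increase the count) shows that each of the two infima above is bounded above by the corresponding infimum with ball radius $c_1\delta/m_2$ and cover parameter $4c_2\rho/m_1$. Finally, the lower bound from Lemma \ref{lemma - Assouad product geometry}, together with the fact that an infimum of a product of non-negative quantities in independent variables factors as the product of infima, gives
\[
\inf_{x\in F}\cover(B_{c_1\delta/m_2}(x)\cap F,4c_2\rho/m_1)\cdot\inf_{y\in G}\cover(B_{c_1\delta/m_2}(y)\cap G,4c_2\rho/m_1) \le \inf_{\vect{x}}\cover(B_{c_1\delta}(\vect{x})\cap(F\times G),c_2\rho).
\]
Chaining the inequalities establishes equi-homogeneity of $F\times G$ with constants $M=M_FM_G$, $c_1$, and $c_2$.

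The only real obstacle is bookkeeping: the distortion factors $m_1,m_2$ from \eqref{product metric condition} together with the factor of $4$ appearing in the lower bound of Lemma \ref{lemma - Assouad product geometry} must be absorbed into $c_1$ and $c_2$ in such a way that, after the two applications of that lemma (one for the upper bound, one for the lower bound), the cover parameters on either side nest correctly and still lie in the admissible ranges for the equi-homogeneity of $F$ and $G$.
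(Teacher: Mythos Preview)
Your proof is correct and follows essentially the same approach as the paper: apply the upper bound of Lemma~\ref{lemma - Assouad product geometry} to split the supremum into a product of suprema for $F$ and $G$, invoke equi-homogeneity of each factor to pass to infima, unify the constants via monotonicity, and then apply the lower bound of Lemma~\ref{lemma - Assouad product geometry} to recombine the product of infima into a single infimum for $F\times G$. The only cosmetic difference is that you absorb the metric-distortion factors $m_1,m_2$ and the factor~$4$ directly into your constants $c_1,c_2$, whereas the paper leaves them explicit in the final inequality; the substance of the argument is identical.
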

\begin{proof}
Fix $\delta_{0}>0$. As $F$ and $G$ are equi-homogeneous, there exist constants $M_{F},M_{G}\geq 1$ and $f_{1},f_{2},g_{1},g_{2}>0$ such that for all $0<\rho<\delta\leq\delta_{0}/m_{1}$
\begin{align}
\sup_{x\in F} \cover\left(B_{\delta}\left(x\right)\cap F,\rho\right)&\leq M_{F} \inf_{x\in F} \cover\left(B_{f_{1}\delta}\left(x\right)\cap F,f_{2}\rho\right)\notag\\
&\leq M_{F} \inf_{x\in F} \cover\left(B_{c_{1}\delta}\left(x\right)\cap F,c_{2}\rho\right)\label{prod F uniform}
\intertext{and}
\sup_{y\in G} \cover\left(B_{\delta}\left(y\right)\cap G,\rho\right)&\leq M_{G} \inf_{y\in G} \cover\left(B_{g_{1}\delta}\left(y\right)\cap G,g_{2}\rho\right)\notag\\
&\leq M_{G} \inf_{y\in G} \cover\left(B_{c_{1}\delta}\left(y\right)\cap G,c_{2}\rho\right),\label{prod G uniform}
\end{align}
where $c_{1}=\max\left(f_{1},g_{1}\right)$ and $c_{2}=\min\left(f_{2},g_{2}\right)$, and the second inequalities follow from the monotonicity of $\cover\left(\cdot,\rho\right)$ and the fact that $\cover\left(A,\cdot\right)$ is non-increasing.

Now, from Lemma \ref{lemma - Assouad product geometry} for all $0<\rho<\delta\leq \delta_{0}$
  \begin{align*}
    N_{F\times G}\left(\delta,\rho\right)&:=\sup_{\vect{x}\in F\times G} \cover\left(B_{\delta}(\vect{x})\cap \left(F\times G\right),\rho\right) \notag\\
    &\leq \left[\sup_{x\in F}\cover\left(B_{\delta/m_{1}}(x)\cap
        F,\rho/m_{2}\right)\right] \left[\sup_{y\in
        G}\cover\left(B_{\delta/m_{1}}(y)\cap
        G,\rho/m_{2}\right)\right]
        \end{align*}
as taking suprema is submultiplicative. Since $0<\rho/m_{2}<\delta/m_{1}\leq \delta_{0}/m_{1}$ it follows from \eqref{prod F uniform} and \eqref{prod G uniform} that $N_{F\times G}\left(\delta,\rho\right)$ is bounded above by
\begin{align*}
&\left[M_{F}\inf_{x\in F}\cover\left(B_{c_{1}\delta/m_{1}}(x)\cap F,c_{2}\rho/m_{2}\right)\right] \left[M_{G}\inf_{y\in G}\cover\left(B_{c_{1}\delta/m_{1}}(y)\cap G,c_{2}\rho/m_{2}\right)\right] \\
    &\ \leq M_{F}M_{G}\inf_{\left(x,y\right)\in F\times G}  \cover\left(B_{c_{1}\delta/m_{1}}(x)\cap F c_{2}\rho/m_{2}\right)\cover\left(B_{c_{1}\delta/m_{1}}(y)\cap G,c_{2}\rho/m_{2}\right),
\end{align*}
as taking infima is supermultiplicative. Again applying Lemma \ref{lemma - Assouad product geometry} we obtain the upper bound
\[
N_{F\times G}\left(\delta,\rho\right)    \leq M_{F}M_{G}\inf_{\vect{x}\in F\times G} \cover\left(B_{\tfrac{c_{1}m_{2}}{m_{1}}\delta}(\vect{x})\cap\left(F\times G\right),\tfrac{c_{2}m_{1}}{4m_{2}}\rho\right)
\]
  for all $0<\rho<\delta\leq \delta_{0}$ and as $\delta_{0}>0$ was arbitrary we conclude that $F\times G$ is equi-homogeneous.
\end{proof}

\section{Generalised Cantor Sets}\label{section - cantor sets}
A generalised Cantor set is a variation of the well known Cantor
middle third set that permits the proportion removed from each
interval to vary throughout the iterative process. Formally, for
$\lambda \in \left(0,1/2\right)$ we define the application of the
generator $\gen_{\lambda}$ to a disjoint set of compact intervals
$\mathcal{I}$ as the procedure in which the open middle $1-2\lambda$
proportion of each interval is removed. It is easy to see that if
$\mathcal{I}$ consists of $k$ disjoint intervals of length $L$ then
$\gen_{\lambda} \mathcal{I}$ consists of $2k$ disjoint intervals of
length $\lambda L$.

\begin{definition}
  Let $\set{\lambda_{i}}_{i\in\mathbb{N}}$ be a sequence with
  $\lambda_{i}\in(0,1/2)$ for all $i\in\mathbb{N}$, let
  $C_{0}=\left[0,1\right]$ and iteratively define the sets
  \begin{align*}
    C_{n}:= \gen_{\lambda_{n}}C_{n-1} \qquad\forall
    n\in\mathbb{N}.
  \end{align*}
  The generalised Cantor set $C$ generated from the sequence
  $\set{\lambda_{i}}_{i\in\mathbb{N}}$ is defined by
  \begin{align*}
    C:= \bigcap_{n=0}^{\infty} C_{n}.
  \end{align*}
\end{definition}
Observe that each intermediary set $C_{n}$ consists of $2^{n}$
disjoint intervals $I_{n}^{1},\ldots,I_{n}^{2^{n}}$ of length
$L_{n}:=\prod_{i=1}^{n}\lambda_{i}$, which we order by increasing left endpoint. Further, observe that the generalised Cantor
set $C$ can be written as the union of the disjoint sets
$I_{n}^{j}\cap C$ for $j=1,\ldots, 2^{n}$, which are identical up to a
translation.

In the remainder we adopt the geometric quantity $\diamcover\left(C,\delta\right)$, which we recall is the minimal cover by sets of {\it diameter} $\delta$ as our primary measure since it is convenient to cover generalised Cantor sets by collections of intervals of a fixed length, and this avoids the factor of $1/2$ that would occur if we used covers by $\delta$-balls.

It is not difficult to determine that for $\delta$ in the range
$L_{n}\leq \delta < L_{n-1}$ the minimum number of sets of diameter
$\delta$ required to cover $C$ satisfies
\begin{equation}\label{cover inequality}
  2^{n-1} \leq \diamcover(C,\delta) \leq 2^{n}
\end{equation}
(see, for example, \cite{RobinsonSharples13RAEX}.)
From this bound we can determine the upper and lower box-counting
dimensions of $C$ from the sequence
$\set{\lambda_{i}}_{i\in\mathbb{N}}$.

\begin{lemma}\label{lemma - boxcounting dimension equalities}
  Let $C$ be the generalised Cantor set generated from the sequence
  $\set{\lambda_{i}}_{i\in\mathbb{N}}$ with
  $\lambda_{i}\in(0,1/2)$. The lower and upper box-counting dimensions
  of $C$ satisfy
  \begin{align}
    \dim_{\rm LB}C &= \liminf_{n\in\mathbb{N}} \frac{n\log 2}{-\sum_{i=1}^{n}\log\lambda_{i}}\label{lower boxcounting dimension inequality}\\
    \text{and}\qquad\dim_{\rm B}C &= \limsup_{n\in\mathbb{N}} \frac{n\log
      2}{-\sum_{i=1}^{n}\log\lambda_{i}}.\label{upper boxcounting
      dimension inequality}
  \end{align}
\end{lemma}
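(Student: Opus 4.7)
The plan is to read off the box-counting dimensions directly from the bound \eqref{cover inequality}, which quantifies $\diamcover(C,\delta)$ on each scale interval $[L_n,L_{n-1})$ with $L_n:=\prod_{i=1}^n\lambda_i$. Since \eqref{geometric inequalities} allows $\diamcover$ to replace $\cover$ in the definitions of $\dim_{\rm LB}$ and $\dim_{\rm B}$, I would work with $\diamcover$ throughout.

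First I would observe that, because $\lambda_i\in(0,1/2)$, the sequence $\{L_n\}$ is strictly decreasing with $L_0=1$ and $L_n\to 0$, so every $\delta\in(0,1)$ lies in a unique half-open interval $[L_n,L_{n-1})$ and determines a unique $n=n(\delta)\in\mathbb{N}$ with $n(\delta)\to\infty$ as $\delta\to 0^+$. Writing $-\log L_k=-\sum_{i=1}^k\log\lambda_i$, the estimate \eqref{cover inequality} together with $-\log L_{n-1}<-\log\delta\leq -\log L_n$ gives the sandwich
\[
\frac{(n-1)\log 2}{-\sum_{i=1}^{n}\log\lambda_i}\;\leq\;\frac{\log\diamcover(C,\delta)}{-\log\delta}\;\leq\;\frac{n\log 2}{-\sum_{i=1}^{n-1}\log\lambda_i}
\]
for every $\delta\in[L_n,L_{n-1})$.

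Next I would take $\liminf$ and $\limsup$ as $\delta\to 0^+$; since $n(\delta)$ attains every sufficiently large natural number, the two operations applied to the middle expression coincide with the same operations on $n$ applied to the outer expressions. To identify the resulting limits with the claimed $\liminf$ and $\limsup$ of $n\log 2/(-\sum_{i=1}^n\log\lambda_i)$, I would exploit the fact that $\lambda_i<1/2$ forces $-\log\lambda_i>\log 2$, so $-\sum_{i=1}^n\log\lambda_i\geq n\log 2$. This yields the vanishing difference
\[
\frac{n\log 2}{-\sum_{i=1}^n\log\lambda_i}-\frac{(n-1)\log 2}{-\sum_{i=1}^n\log\lambda_i}=\frac{\log 2}{-\sum_{i=1}^n\log\lambda_i}\leq\frac{1}{n}\longrightarrow 0,
\]
while on the upper bracket the identity $\frac{n\log 2}{-\sum_{i=1}^{n-1}\log\lambda_i}=\frac{n}{n-1}\cdot\frac{(n-1)\log 2}{-\sum_{i=1}^{n-1}\log\lambda_i}$ together with $n/(n-1)\to 1$ and a reindexing $n\mapsto n-1$ gives the same $\limsup$ as the target expression. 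Both formulas in the statement then follow.

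I do not anticipate any serious obstacle here: the argument is essentially a careful reading of \eqref{cover inequality} on each scale combined with the bookkeeping needed to pass between $n$ and $n-1$ in the denominators, which is absorbed by the linear lower bound $-\sum_{i=1}^n\log\lambda_i\geq n\log 2$ obtained from $\lambda_i<1/2$.
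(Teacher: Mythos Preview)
Your proposal is correct and follows essentially the same route as the paper: both start from the sandwich obtained by combining \eqref{cover inequality} with $-\log L_{n-1}<-\log\delta\le-\log L_n$, and then argue that the two bracketing sequences have the same $\liminf$ and $\limsup$ as $n\log 2/(-\log L_n)$. The only cosmetic difference is that the paper handles the upper bracket additively, writing $\tfrac{n\log 2}{-\log L_{n-1}}=\tfrac{(n-1)\log 2}{-\log L_{n-1}}+\tfrac{\log 2}{-\log L_{n-1}}$ and noting the correction term vanishes, whereas you use the multiplicative factor $n/(n-1)\to 1$; and you make the vanishing explicit via the bound $-\log L_n\ge n\log 2$, while the paper simply invokes $1/(-\log L_n)\to 0$.
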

\begin{proof}
  For $\delta$ in the range $L_{n} \leq \delta < L_{n-1}$ the
  cover estimates \eqref{cover inequality} yield
  \begin{align}
    \frac{\left(n-1\right)\log 2}{-\log L_{n}}\leq \frac{\log
      \diamcover(C,\delta)}{-\log \delta} &\leq \frac{n\log 2}{-\log
      L_{n-1}}\notag \intertext{from which we
      derive} \frac{n\log 2}{-\log L_{n}} - \frac{\log 2}{-\log
      L_{n}}\leq \frac{\log \diamcover(C,\delta)}{-\log \delta} &\leq
    \frac{\left(n-1\right)\log 2}{-\log L_{n-1}} + \frac{\log 2}{-\log
      L_{n-1}}.\label{box-counting inequality}
  \end{align}
  Taking limits as $\delta\rightarrow 0$, it is clear that
  $n\rightarrow \infty$ and $1/-\log L_{n}\rightarrow 0$ so taking the
  limit inferior of \eqref{box-counting inequality} we obtain
  \begin{align}
    \liminf_{n\in\mathbb{N}}\frac{n\log 2}{-\log L_{n}} \leq
    \liminf_{\delta\rightarrow 0+} \frac{\log \diamcover(C,\delta)}{-\log
      \delta} &\leq \liminf_{n\in\mathbb{N}}\frac{\left(n-1\right)\log
      2}{-\log L_{n-1}}\label{liminf inequality}
  \end{align}
  and as the upper and lower bounds of \eqref{liminf inequality} are
  equal we conclude that
  \begin{align*}
    \liminf_{\delta\rightarrow 0+} \frac{\log \diamcover(C,\delta)}{-\log
      \delta} &=\liminf_{n\in\mathbb{N}}\frac{n\log 2}{-\log L_{n}}\\
    &=\liminf_{n\in\mathbb{N}}\frac{n\log 2}{-\sum_{i=1}^{n}\log
      \lambda_{i}},
  \end{align*}
  which is precisely \eqref{lower boxcounting dimension
    inequality}. The upper box-counting dimension equality
  \eqref{upper boxcounting dimension inequality} follows similarly
  after taking the limit superior of \eqref{box-counting inequality}.
\end{proof}
This relationship is particularly pleasing as
$\frac{n}{\sum_{i=1}^{n} \log \lambda_{i}}=\frac{1}{\log a_{n}}$ where
$a_{n}$ is nothing more than the geometric mean of the partial
sequence $\lambda_{1},\ldots,\lambda_{n}$.

In the remainder of this section we prove that generalised Cantor sets are equi-homogeneous and use this result to determine the Assouad dimension for a certain class of generalised Cantor sets. This class includes the sets considered in Section \ref{section - construction} which, in particular, will have Assouad dimension strictly greater than their upper box-counting dimension.

To this end we first consider the minimal covers
of subintervals of $C_{n}$. The following two properties of the sets
$C_n$ are almost immediate from the construction:
\begin{itemize}
\item[(i)] for each $j=1,\ldots,2^{n}$ the subinterval $I_{n}^{j}$ of
  $C_{n}$ satisfies
  \begin{equation}\label{intervals uniform}
    \diamcover(I_{n}^{j}\cap C,\rho)=\diamcover(I_{n}^{1}\cap C,\rho);
  \end{equation}
\item[(ii)] each subinterval $I_{n-1}$ of $C_{n-1}$ satisfies
  \begin{equation}\label{subinterval bound}
    \diamcover(I_{n-1}\cap C,\rho)\leq 2 \diamcover(I_{n}^{1}\cap C,\rho)
  \end{equation}
  for all $\rho$ in the range $0<\rho<L_{n-1}$.
\end{itemize}
For the second of these, notice that by construction
$I_{n-1}\cap C=\left(I_n^i\cap C\right)\cup \left(I_n^{i+1}\cap C\right)$ for some $i$, and so
\[
    \diamcover(I_{n-1}\cap C,\rho)\le \diamcover(I_n^i\cap C,\rho) + \diamcover(I_n^{i+1}\cap C,\rho)
    = 2\diamcover(I_n\cap C,\rho).
\]

\begin{lemma}\label{lemma - Cantor sets are equi-homogeneous}
  Generalised Cantor sets are equi-homogeneous.
\end{lemma}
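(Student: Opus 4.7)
The plan is to use the formulation of equi-homogeneity in terms of $\diamcover$ (which is equivalent by the remark following Definition \ref{equihom}) and to sandwich the local covering numbers $\diamcover(B_\delta(x) \cap C, \rho)$ between the (common) covering numbers of the canonical level-$n$ and level-$(n-1)$ subpieces, exploiting the translation-invariance (i) and the factor-of-two comparison (ii). After disposing of the trivial range $\delta \geq 1$ (where $B_\delta(x) \supset C$ for every $x \in C$, so $\sup$ and $\inf$ coincide), I would choose, for a given $0 < \rho < \delta < 1$, the unique integer $n \geq 1$ with $L_n \leq \delta < L_{n-1}$; all comparisons below take place at this level.

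For the lower bound on the infimum, every $x \in C$ lies in some level-$n$ subinterval $I_n^{j(x)}$, which has length $L_n \leq \delta$ and is therefore contained in $B_\delta(x)$. Monotonicity of $\diamcover(\cdot, \rho)$ combined with (i) then gives
\[
\diamcover(B_\delta(x) \cap C, \rho) \geq \diamcover(I_n^{j(x)} \cap C, \rho) = \diamcover(I_n^1 \cap C, \rho)
\]
uniformly in $x$, and hence the same bound for $\inf_{x \in C} \diamcover(B_\delta(x) \cap C, \rho)$. For the upper bound on the supremum, the key geometric observation is that, because $\delta < L_{n-1}$ and the gaps between consecutive level-$(n-1)$ intervals are nonnegative, the closed interval $[x - \delta, x + \delta]$ centred at $x \in I_{n-1}^j$ cannot extend past $I_{n-1}^{j-1}$ on the left or past $I_{n-1}^{j+1}$ on the right. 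Hence $B_\delta(x) \cap C$ is contained in the union of at most three level-$(n-1)$ subpieces, so subadditivity of $\diamcover(\cdot, \rho)$ together with (i) gives $\diamcover(B_\delta(x) \cap C, \rho) \leq 3 \diamcover(I_{n-1}^1 \cap C, \rho)$, and one application of (ii) (valid since $\rho < \delta < L_{n-1}$) bounds this in turn by $6 \diamcover(I_n^1 \cap C, \rho)$.

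Combining the two estimates gives $\sup_{x \in C} \diamcover(B_\delta(x) \cap C, \rho) \leq 6 \inf_{x \in C} \diamcover(B_\delta(x) \cap C, \rho)$, so $C$ is equi-homogeneous with universal constants $M = 6$ and $c_1 = c_2 = 1$ (in particular, independent of $\delta_0$). The only real difficulty is the geometric count in the upper bound: one must check carefully that the strict inequality $\delta < L_{n-1}$ combined with the nonnegativity of the inter-interval gaps is sharp enough to rule out a fourth level-$(n-1)$ interval being hit. Everything else is a mechanical application of monotonicity, subadditivity, and the self-similar properties (i) and (ii).
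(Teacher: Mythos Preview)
Your proposal is correct and follows essentially the same route as the paper: sandwich $\diamcover(B_\delta(x)\cap C,\rho)$ between $\diamcover(I_n^1\cap C,\rho)$ and $3\diamcover(I_{n-1}^1\cap C,\rho)$ using (i), then invoke (ii) to collapse the upper bound to $6\diamcover(I_n^1\cap C,\rho)$, yielding $M=6$ and $c_1=c_2=1$. If anything, you are slightly more careful than the paper in explicitly disposing of the range $\delta\ge 1$ and in spelling out why $[x-\delta,x+\delta]$ cannot meet a fourth level-$(n-1)$ interval.
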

\begin{proof}
  Let $C$ be the generalised Cantor set generated from the sequence
  $\set{\lambda_{i}}_{i\in\mathbb{N}}$ with
  $\lambda_{i}\in(0,1/2)$. Let $x\in C$ be arbitrary, and fix $\delta$
  in the range $L_{n}\leq \delta < L_{n-1}$. As $x\in C \subset C_{n}$
  and $\delta\geq L_{n}$ the ball $B_{\delta}\left(x\right)$ contains
  at least one subinterval $I_{n}^{j}$ of $C_{n}$, so
  \begin{align}\label{interval in ball}
    I_{n}^{j}\cap C \subset B_{\delta}\left(x\right)\cap C.
  \end{align}
  Further, as $\delta<L_{n-1}$, the ball $B_{\delta}\left(x\right)$
  intersects at most three subintervals of $C_{n-1}$, say
  $I_{n-1}^{k},I_{n-1}^{k+1},I_{n-1}^{k+2}$ for some $k$, so
  \begin{align}\label{ball in two intervals}
    B_{\delta}\left(x\right)\cap C\subset \left(I_{n-1}^{k}\cap
      C\right)\cup\left(I_{n-1}^{k+1}\cap
      C\right)\cup\left(I_{n-1}^{k+2}\cap C\right).
  \end{align}
 From the inclusions \eqref{interval in ball} and \eqref{ball in two
    intervals}, the monotonicity and subadditivity of $\diamcover(\cdot,\rho)$,
  and (\ref{intervals uniform}), we derive
  \begin{align}
    \diamcover(I_{n}^{1}\cap C,\rho)\leq \diamcover(B_{\delta}\left(x)\cap C,\rho\right)
    &\leq 3\diamcover(I_{n-1}^{1}\cap C,\rho) &\text{for all}\ \rho>0\label{pre Assouad quantity bound}
  \end{align}
  for $\delta$ in the range $L_{n}\leq \delta < L_{n-1}$. Restricting
  $\rho$ to the range $0<\rho<\delta<L_{n-1}$ we apply
  (\ref{subinterval bound}) to conclude that
  \begin{align*}
    \diamcover(I_{n}^{1}\cap C,\rho)\leq \diamcover(B_{\delta}\left(x)\cap C,\rho\right)
    &\leq 6\diamcover(I_{n}^{1}\cap C,\rho)
  \end{align*}
  for all $\delta$ in the range $L_{n}\leq \delta < L_{n-1}$, all
  $\rho$ in the range $0<\rho<\delta$ and all $x\in C$.
  Consequently,
  \begin{align}\label{Cantor equihom diameter}
    \sup_{x\in C} \diamcover(B_{\delta}\left(x)\cap C,\rho\right) \leq 6
    \inf_{x\in C} \diamcover(B_{\delta}\left(x)\cap C,\rho\right)
  \end{align}
  for all $\delta,\rho$ satisfying $0<\rho<\delta$ so we conclude that $C$ is equi-homogeneous.
\end{proof}

Generalised Cantor sets, therefore, satisfy the hypotheses of Corollary \ref{corollary - supremum bounds}. We will use this fact in the following lemmas to find bounds on the Assouad dimension for a particular class of generalised Cantor sets, which will be useful in the remainder.

\begin{lemma}\label{lemma - Assouad upper bound}
If there exists a $\lambda$ such that $\lambda_{i}\leq \lambda$ for all $i\in\mathbb{N}$ then $\dim_{\rm A}C\leq-\tfrac{\log 2}{\log \lambda}$.
\end{lemma}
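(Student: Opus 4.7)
The plan is to show that $C$ is $s$-homogeneous for $s := -\log 2/\log \lambda$, so that $\dim_{\rm A} C \le s$ by the definition of Assouad dimension. The exponent is chosen precisely so that $(1/\lambda)^s = 2$, an identity that will do all the work.

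The main geometric input is already latent in the proof of Lemma \ref{lemma - Cantor sets are equi-homogeneous}: if $L_n \le \delta < L_{n-1}$ and $x \in C$, the inclusion argument there gives $\diamcover(B_\delta(x) \cap C, \rho) \le 3 \diamcover(I_{n-1}^1 \cap C, \rho)$ for every $\rho > 0$. I would then fix $0 < \rho < \delta \le 1$, introduce the unique integers $n, m \ge 1$ with $L_n \le \delta < L_{n-1}$ and $L_m \le \rho < L_{m-1}$ (so $m \ge n$ since $L_k$ is decreasing), and observe that $I_{n-1}^1 \cap C_m$ consists of exactly $2^{m-n+1}$ intervals of length $L_m \le \rho$. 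These cover $I_{n-1}^1 \cap C$, so $\diamcover(I_{n-1}^1 \cap C, \rho) \le 2^{m-n+1}$ and hence $\diamcover(B_\delta(x) \cap C, \rho) \le 3 \cdot 2^{m-n+1}$.

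It then remains to show this bound is dominated by a multiple of $(\delta/\rho)^s$. When $m \ge n+1$, the hypothesis $\lambda_i \le \lambda$ gives
\[
\frac{\delta}{\rho} > \frac{L_n}{L_{m-1}} = \prod_{i=n+1}^{m-1} \frac{1}{\lambda_i} \ge \left(\frac{1}{\lambda}\right)^{m-n-1},
\]
so $(\delta/\rho)^s \ge 2^{m-n-1}$ by the key identity, yielding $\diamcover(B_\delta(x) \cap C, \rho) \le 12 (\delta/\rho)^s$. Since $\cover \le \diamcover$ by \eqref{geometric inequalities}, this is the required $s$-homogeneity bound on the scale $\delta_0 = 1$; the extension to arbitrary $\delta_0$ is routine, because for $\delta \ge 1$ one has $B_\delta(x) \cap C \subset C$ and the global estimate $\diamcover(C, \rho) \le C\rho^{-s}$ follows by the same counting argument (this time covering all of $C$ by the $2^m$ intervals of $C_m$).

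The delicate point, and really the main obstacle, is the edge case $m = n$, where $L_n/L_{m-1} = \lambda_n$ can be arbitrarily small and so supplies no useful lower bound on $(\delta/\rho)^s$. In this regime, however, the covering bound reduces to the absolute constant $3 \cdot 2 = 6$, while $(\delta/\rho)^s > 1$ from $\delta > \rho$, so the inequality $\diamcover(B_\delta(x) \cap C, \rho) \le 12(\delta/\rho)^s$ still holds with room to spare. This is the only step requiring a case split, and once isolated it is straightforward.
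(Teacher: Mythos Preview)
Your argument is correct and in fact somewhat more direct than the paper's. The paper first establishes the \emph{global} ratio bound
\[
\frac{\diamcover(C,\rho)}{\diamcover(C,\delta)}\le 4\,(\delta/\rho)^{-\log 2/\log\lambda}
\]
from the cover estimates \eqref{cover inequality}, and then invokes equi-homogeneity via Corollary~\ref{corollary - supremum bounds} to convert this into a bound on $\sup_{x}\diamcover(B_\delta(x)\cap C,\rho)$. You bypass the equi-homogeneity machinery entirely by working locally from the start: you reuse the inclusion \eqref{pre Assouad quantity bound} to bound the local cover by $3\,\diamcover(I_{n-1}^1\cap C,\rho)$, then count the $2^{m-n+1}$ level-$m$ intervals inside $I_{n-1}^1$ explicitly. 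Both routes hinge on the same estimate $\delta/\rho>L_n/L_{m-1}\ge\lambda^{-(m-n-1)}$, and both must treat the degenerate case where $\delta$ and $\rho$ lie at the same scale (your $m=n$, the paper's $m=0,1$) separately. Your approach is shorter and self-contained; the paper's has the virtue of cleanly separating the structural property (equi-homogeneity) from the quantitative input (the global covering ratio), which is the pattern reused elsewhere.

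One small remark: the inequality $\cover\le\diamcover$ you cite is not literally among the four relations in \eqref{geometric inequalities}, though it is immediate (any set of diameter $\rho$ sits inside a $\rho$-ball centred at one of its points). Alternatively, since \eqref{geometric inequalities} shows that $\cover$ and $\diamcover$ define the same notion of $s$-homogeneity up to constants, your bound on $\diamcover(B_\delta(x)\cap C,\rho)$ already suffices.
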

\begin{proof}
If $\lambda\geq 1/2$ then there is nothing to prove as trivially $\dim_{\rm A}C\leq 1$, so assume $\lambda\in \left(0,1/2\right)$.
Consider $\delta,\rho$ with $\rho<\delta$ in the ranges $L_{n}\leq \delta <L_{n-1}$ and $L_{n+m}\leq \rho < L_{n+m-1}$ for some $n\in\mathbb{N}$ and $m\in\mathbb{N}\cup\set{0}$. Observe that for $m\geq 2$
\begin{align}
\delta/\rho \geq L_{n}/L_{n+m-1} = \prod_{i=n+1}^{n+m-1} \lambda_{i}^{-1} \geq \lambda^{-\left(m-1\right)},\label{bounded generators}
\end{align}
which, as $\delta/\rho>1$, also holds when $m=0,1$.
Now, it follows from \eqref{bounded generators} and the cover estimates \eqref{cover inequality} that the ratio
\begin{align}\label{Cantor ratio upper bound}
\frac{\diamcover\left(C,\rho\right)}{\diamcover\left(C,\delta\right)}\leq \frac{2^{n+m}}{2^{m-1}}=4\cdot 2^{m-1}=4\left(\lambda^{-\left(1-m\right)}\right)^{-\frac{\log 2}{\log \lambda}}\leq 4 \left(\delta/\rho\right)^{-\frac{\log 2}{\log \lambda}},
\end{align}
which, as $n$ and $m$ were arbitrary, holds for all $\delta,\rho$ with $0<\rho<\delta$.

Next, as $C$ is equi-homogeneous it follows from Corollary \ref{corollary - supremum bounds} and the geometric inequalities \eqref{geometric inequalities} that for all $\delta_{0}>0$ there exist constants $M\geq 1$, and $c_{1},c_{2}>0$ with $c_{2}\leq 1\leq c_{1}$ such that
\begin{align*}
\sup_{x\in C}\diamcover\left(B_{\delta}\left(x\right)\cap C,\rho\right)\leq \sup_{x\in C} \cover\left(B_{\delta}\left(x\right)\cap C,\rho/2\right)&\leq M \frac{\cover\left(C,c_{2}\rho/2\right)}{\cover\left(C,4c_{1}\delta\right)}\\
&\leq M \frac{\diamcover\left(C,c_{2}\rho/4\right)}{\diamcover\left(C,8c_{1}\delta\right)}
\end{align*}
for $0<\rho<\delta\leq\delta_{0}$. As $\rho<\delta$ implies that $c_{2}\rho/4<8c_{1}\delta$ it follows from \eqref{Cantor ratio upper bound} that
\begin{align*}
\sup_{x\in C}\diamcover\left(B_{\delta}\left(x\right)\cap C,\rho\right)&\leq M 4 \left(8c_{1}\delta/\left(c_{2}\rho/4\right)\right)^{-\frac{\log 2}{\log \lambda}}
= 4M\left(32c_{1}/c_{2}\right)^{-\frac{\log 2}{\log \lambda}}\left(\delta/\rho\right)^{-\frac{\log 2}{\log \lambda}}
\end{align*}
for all $\delta,\rho$ with $0<\rho<\delta\leq \delta_{0}$. This is precisely the claim that the generalised Cantor set $C$ is $\left(-\log 2/\log \lambda\right)$-homogeneous, so we conclude that $\dim_{\rm A}C\leq -\tfrac{\log 2}{\log \lambda}$.
\end{proof}

We can also obtain a similar lower bound.
\begin{lemma}\label{lemma - Assouad lower bound}
If there exists a $\lambda\in\left(0,1/2\right)$ and a sequence $\set{j_{m}}_{m\in\mathbb{N}}$ such that for each $m\in\mathbb{N}$ the $m$ consecutive generators $\lambda_{j_{m}+1},\ldots, \lambda_{j_{m}+m}\geq \lambda$ then $\dim_{\rm A}C\geq -\tfrac{\log 2}{\log \lambda}$.
\end{lemma}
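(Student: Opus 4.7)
The strategy is to show, for each $m\in\mathbb{N}$, that there is a point $x_m\in C$ and scales $\rho_m<\delta_m\leq 1$ for which the local covering number grows at least as fast as $(\delta_m/\rho_m)^{s}$, where $s := -\log 2/\log\lambda$, with $\delta_m/\rho_m\to\infty$ as $m\to\infty$. This will preclude $s'$-homogeneity for every $s'<s$, and hence yield $\dim_{\rm A}C\geq s$. I take $x_m$ to be the left endpoint of the stage-$j_m$ interval $I_{j_m}^{1}$ (which lies in $C$, since endpoints are never removed) and set $\delta_m:=L_{j_m}$, $\rho_m:=L_{j_m+m}$. Because $I_{j_m}^{1}$ has length $L_{j_m}=\delta_m$ and contains $x_m$, the inclusion $I_{j_m}^{1}\subset B_{\delta_m}(x_m)$ holds, reducing the problem to bounding $\diamcover(I_{j_m}^{1}\cap C,\rho_m)$ from below.

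The key observation is that $I_{j_m}^{1}\cap C$ is, up to translation and rescaling by $L_{j_m}^{-1}$, precisely the generalised Cantor set generated by the shifted sequence $\lambda_{j_m+1},\lambda_{j_m+2},\ldots$; its stage-$m$ intervals have length $L_{j_m+m}/L_{j_m}$ after rescaling. Applying the elementary bound \eqref{cover inequality} to this rescaled Cantor set at the scale $\rho_m/L_{j_m}$ (which corresponds exactly to stage $m$) therefore yields $\diamcover(I_{j_m}^{1}\cap C,\rho_m)\geq 2^{m-1}$. Using the hypothesis $\lambda_{j_m+1},\ldots,\lambda_{j_m+m}\geq\lambda$, the ratio satisfies $\delta_m/\rho_m = \prod_{i=j_m+1}^{j_m+m}\lambda_i^{-1}\leq \lambda^{-m}$, and since $\lambda^{-ms}=2^m$ by the definition of $s$, this gives $(\delta_m/\rho_m)^{s}\leq 2^m$, so
\[
\diamcover\left(B_{\delta_m}(x_m)\cap C,\rho_m\right)\geq 2^{m-1}\geq \tfrac{1}{2}\left(\delta_m/\rho_m\right)^{s}.
\]

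Converting from $\diamcover$ to $\cover$ via \eqref{geometric inequalities} (at the cost of replacing $\rho_m$ by $\rho_m/2$, which only adjusts the constants and the ratio by a bounded factor) gives an inequality of the same form for $\cover(B_{\delta_m}(x_m)\cap C,\rho_m/2)$. Finally, because $\lambda_i<1/2$ forces $\delta_m/\rho_m\geq 2^m\to\infty$ while $\delta_m\leq 1$, if $C$ were $s'$-homogeneous for some $s'<s$ then the upper bound from Definition \ref{definition - homogeneous} would force the quantity $(\delta_m/\rho_m)^{s-s'}$ to remain bounded in $m$, a contradiction. The only delicate point in the whole argument is the identification of $I_{j_m}^{1}\cap C$ as a rescaled generalised Cantor set with shifted generators, which is what legitimises the use of \eqref{cover inequality} inside a single subinterval; everything else is routine bookkeeping with the scales.
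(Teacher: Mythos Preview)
Your proof is correct and tracks the paper's closely: both choose $\delta_m=L_{j_m}$ and $\rho_m=L_{j_m+m}$, both establish $\delta_m/\rho_m\le\lambda^{-m}$ from the hypothesis and $\delta_m/\rho_m\ge 2^m$ from $\lambda_i<1/2$, both arrive at a local-cover lower bound of order $2^{m-1}\ge\tfrac12(\delta_m/\rho_m)^{s}$, and both finish by the same contradiction with $s'$-homogeneity for any $s'<s$. The only substantive difference lies in how the bound $2^{m-1}$ is obtained. You exhibit an explicit witness point $x_m=0$, use $I_{j_m}^{1}\subset B_{\delta_m}(x_m)$, identify $I_{j_m}^{1}\cap C$ as a rescaled generalised Cantor set on the shifted generator sequence $\lambda_{j_m+1},\lambda_{j_m+2},\ldots$, and then apply \eqref{cover inequality} to that rescaled set at its stage $m$. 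The paper instead bounds the supremum over $x\in C$ via the ratio inequality of Lemma~\ref{lemma - local cover estimates} together with \eqref{geometric inequalities}, obtaining
\[
\sup_{x\in C}\diamcover\bigl(B_{2\delta_m}(x)\cap C,\rho_m/4\bigr)\ge\frac{\diamcover(C,\rho_m)}{\diamcover(C,\delta_m)}\ge\frac{2^{j_m+m-1}}{2^{j_m}}=2^{m-1}
\]
from the global cover estimates \eqref{cover inequality} applied to $C$ itself. Your route is slightly more geometric and pins down a concrete point, at the cost of invoking the (easy) self-similarity identification; the paper's route sidesteps that identification by reducing everything to already-established global quantities. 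Either way the constants and the conclusion are identical.
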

\begin{proof}
Let $\varepsilon>0$ and suppose for a contradiction that there exist $\delta_{0},\eta>0$ such that
\begin{align}\label{Assouad epsilon bound}
\sup_{x\in C} \diamcover\left(B_{\delta}\left(x\right)\cap C,\rho\right)\leq \eta\left(\delta/\rho\right)^{-\tfrac{\log 2}{\log \lambda}-\varepsilon}\quad \text{for all}\quad 0<\rho<\delta\leq\delta_{0}.
\end{align}
First observe that from Lemma \ref{lemma - local cover estimates} and the geometric inequalities \eqref{geometric inequalities} we derive
\begin{align}
\sup_{x\in C} \diamcover\left(B_{2\delta}\left(x\right)\cap C,\rho/4\right) \geq \sup_{x\in C} \cover\left(B_{2\delta}\left(x\right)\cap C,\rho/2\right)
&\geq \frac{\cover\left(C,\rho/2\right)}{\cover\left(C,2\delta\right)}\notag\\
&\geq \frac{\diamcover\left(C,\rho\right)}{\diamcover\left(C,\delta\right)}\label{ratio in D}
\end{align}
for $\delta,\rho$ with $0<\rho<\delta$.

Now, let $\delta_{m}=L_{j_{m}}$ and $\rho_{m}=L_{j_{m}+m}$, and observe that $\rho_{m}<\delta_{m}$ for all $m$, and that $\delta_{m}\rightarrow 0$ as $m\rightarrow\infty$. Moreover,
\begin{align}
\delta_{m}/\rho_{m}&=L_{j_{m}}/L_{j_{m}+m}=\prod_{i=1}^{m}\lambda_{j_{m}+i}^{-1}\leq \lambda^{-m}\label{generator consecutive bound}
\end{align}
by assumption and similarly $\delta_{m}/\rho_{m}\geq 2^{m}$ as $\lambda_{i}<1/2$ for all $i\in\mathbb{N}$.
It follows from \eqref{ratio in D} and the cover estimates \eqref{cover inequality} that
\begin{align}
\sup_{x\in C}\diamcover\left(B_{2\delta_{m}}\left(x\right)\cap C,\rho_{m}/4\right)\geq \frac{\diamcover\left(C,\rho_{m}\right)}{\diamcover\left(C,\delta_{m}\right)}\geq \frac{2^{j_{m}+m-1}}{2^{j_{m}}}=\frac{1}{2}2^{m}&=\frac{1}{2}\left(\lambda^{-m}\right)^{-\frac{\log 2}{\log \lambda}}\notag\\
&\geq \frac{1}{2}\left(\delta_{m}/\rho_{m}\right)^{-\frac{\log 2}{\log \lambda}}
\end{align}
from \eqref{generator consecutive bound}. Consequently, with the assumption \eqref{Assouad epsilon bound} it follows that
\begin{align*}
\frac{1}{2}\left(\delta_{m}/\rho_{m}\right)^{-\tfrac{\log 2}{\log \lambda}} \leq \sup_{x\in C}\diamcover\left(B_{2\delta_{m}}\left(x\right)\cap C,\rho_{m}/4\right)\leq \eta \left(2\delta_{m}/\left(\rho_{m}/4\right)\right)^{-\tfrac{\log 2}{\log \lambda}-\varepsilon}
\end{align*}
for $m$ sufficiently large that $2\delta_{m}<\delta_{0}$. Rearranging, and recalling that $\delta_{m}/\rho_{m}\geq 2^{m}$, we obtain
\begin{align*}
2^{m}\leq \delta_{m}/\rho_{m}\leq \left(2\eta8^{-\frac{\log 2}{\log \lambda}-\varepsilon}\right)^{\frac{1}{\varepsilon}}
\end{align*}
for all $m$ sufficiently large, which is a contradiction.

We conclude that $C$ is not $\left(-\log 2/\log \lambda - \varepsilon\right)$-homogeneous for any $\varepsilon>0$, so $\dim_{\rm A}C\geq -\tfrac{\log 2}{\log \lambda}$.
\end{proof}

\section{Strict inequality in the two product formulae}\label{section - construction}

In this section we provide a method for constructing two generalised
Cantor sets $C$ and $D$ so that the Assouad dimensions of these sets
and their product satisfy
\begin{align*}
\dim_{\rm A}C=\dim_{\rm A}D=\dim_{\rm A}\left(C\times D\right)=\alpha
\end{align*}
for $\alpha\in\left(0,1\right)$. In particular for these sets the Assouad dimension product inequality \eqref{Assouad product formula} is strict and maximal in the sense that the sum $\dim_{\rm A}C+\dim_{\rm A}D$ takes the maximal value $2\dim_{\rm A}\left(C\times D\right)$.

This task is significantly simplified using the results of the
previous sections that relate the Assouad dimension to the more
manageable box-counting dimensions.  In essence we construct these
sets so that the significant length-scales are common to both sets,
which is similar in approach to the compatible generalised Cantor sets
of Robinson and Sharples \cite{RobinsonSharples13RAEX}.

Let $q\in(0,\frac{1}{2})$ and let $a=\set{a_{i}}$ be a sequence of positive integers. We define two generalised Cantor sets $C$ and
$D$ via the respective sequences $\set{\lambda_{i}}$ and
$\set{\mu_{i}}$ defined by
\begin{align*}
  \lambda_{i} &:=
  \begin{cases}
    q^{a_{2k}+1} & i=n_{k}\quad\text{for some}\;k\in\mathbb{N}\\
    q & \text{otherwise}
  \end{cases}\\
  \mu_{i} &:=
  \begin{cases}
    q^{a_{2k+1}+1} & i=m_{k}\quad\text{for some}\;k\in\mathbb{N}\\
    q & \text{otherwise},
  \end{cases}
\end{align*}
where $n_{k}=\sum_{j=1}^{k}a_{2j-1}$ and
$m_{k}=a_{1}+\sum_{j=1}^{k}a_{2j}$. For brevity we say that the pair of sets $\left(C,D\right)$ is generated by $\left(q,a\right)$, and we denote the partial sum
$s_{k}=\sum_{i=1}^{k}a_{i}$. Essentially, the sequences of generators
$\lambda_{i}$ and $\mu_{i}$ are chosen so that, when $\delta$ is restricted to the
range $\left[q^{s_{k+1}},q^{s_{k}}\right]$, one of the functions
$\diamcover(C,\delta)$ or $\diamcover(D,\delta)$ scales like $\delta^{-\log 2/\log q}$
while the other is essentially constant, and such that these roles
alternate as $k$ increases. While the growth of the individual
functions $\diamcover(C,\delta)$ and $\diamcover(D,\delta)$ fluctuates with $\delta$,
the product $\diamcover(C,\delta)\diamcover(D,\delta)$ scales like $\delta^{-\log 2/\log
  q}$ for all $\delta$.

\begin{theorem}\label{productset}
 Let the pair of generalised Cantor sets $C$ and $D$ be generated by $\left(q,a\right)$. For all $\delta_0>0$ there exists a constant $\eta>0$ such that
  \begin{align}\label{product set control}
    \eta^{-1}\delta^{-\frac{\log 2}{\log q}}&\leq \diamcover(C\times
    D,\delta)\leq \eta\, \delta^{-\frac{\log 2}{\log q}}
    &\text{for all}\ 0<\delta<\delta_0,
  \end{align}
  so that in particular
\[
\dim_{\rm LB}\left(C\times D\right)=\dim_{\rm B}\left(C\times
  D\right)=\dim_A(C\times D)=-\log 2/\log q.
\]
\end{theorem}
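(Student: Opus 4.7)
The plan is to first establish the two-sided bound \eqref{product set control} by analysing $\diamcover(C,\delta)$ and $\diamcover(D,\delta)$ directly at each scale, and then invoking the product geometry inequality \eqref{product geometry inequality in N} (combined with the equivalence of $\cover$ and $\diamcover$ from \eqref{geometric inequalities}) to transfer the bound to $\diamcover(C\times D,\delta)$. Once \eqref{product set control} is in hand, the equalities of all three dimensions follow: the box-counting dimension identities are read off immediately, and the Assouad dimension identity then follows from Theorem \ref{theorem - equi-homogeneous Assouad equal to box}, whose hypotheses are supplied by \eqref{product set control} itself (which gives attained and equal box-counting dimensions for $C\times D$) together with Lemmas \ref{lemma - Cantor sets are equi-homogeneous} and \ref{lemma - equi-homogeneous products} (which give equi-homogeneity of $C\times D$).

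The heart of the argument is a scale-by-scale analysis. Writing $L^C_n$ and $L^D_n$ for the interval lengths at level $n$ in the constructions of $C$ and $D$ respectively, a direct induction on the construction gives $L^C_{n_k}=q^{s_{2k}}$ and $L^D_{m_k}=q^{s_{2k+1}}$, so the ``transition scales'' $q^{s_k}$ interleave those of $C$ (at even $k$) with those of $D$ (at odd $k$). I would partition $(0,\delta_0)$ into the ranges $[q^{s_{k+1}},q^{s_k})$ and check, using the elementary cover estimates \eqref{cover inequality}, that on each such range exactly one of the two sets is in an \emph{active} phase (all nearby generators equal to $q$, so $\diamcover$ doubles whenever $\delta$ shrinks by a factor of $q$) while the other is in a \emph{dormant} phase (having just been cut by a sharp generator $q^{a_j+1}$, so $\diamcover$ is constant up to a factor of $2$). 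Writing $\delta=q^r$, on the range $[q^{s_{2\ell+1}},q^{s_{2\ell}})$ one obtains $\diamcover(C,\delta)\asymp 2^{n_\ell+(r-s_{2\ell})}$ and $\diamcover(D,\delta)\asymp 2^{m_\ell}$, while on $[q^{s_{2\ell+2}},q^{s_{2\ell+1}})$ the roles of $C$ and $D$ swap.

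The algebraic punchline is the pair of identities $n_k+m_k=a_1+s_{2k}$ and $n_{k+1}+m_k=a_1+s_{2k+1}$, both of which follow directly from $n_k=\sum_{j=1}^k a_{2j-1}$ and $m_k=a_1+\sum_{j=1}^k a_{2j}$. These collapse both cases above to $\diamcover(C,\delta)\diamcover(D,\delta)\asymp 2^{a_1}\cdot 2^r=2^{a_1}\delta^{-\log 2/\log q}$, with constants depending only on $q$ and the fixed initial term $a_1$. Feeding this into \eqref{product geometry inequality in N} yields \eqref{product set control} for some $\eta=\eta(q,a_1,\delta_0)$. The main obstacle is the bookkeeping near the transition scales $q^{s_k}$, where the cover count of one of $C$ or $D$ drops sharply and the active/dormant dichotomy needs to be stated carefully (with extra care when some $a_j$ equals $1$, so that ``active'' and ``dormant'' exchange immediately); however all the resulting slack consists of bounded constant factors, which are absorbed into $\eta$. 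The interleaving design of the two sequences of generators is precisely what ensures that this constant depends only on $a_1$, and not on the whole (possibly unbounded) sequence $\{a_j\}$.
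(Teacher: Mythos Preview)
Your proposal is correct and follows essentially the same route as the paper: the paper likewise computes $L_{n}=q^{n-n_k+s_{2k}}$ for $n_k\le n<n_{k+1}$ (and the analogue for $D$), derives the ``variable'' and ``constant'' ranges of $\diamcover(C,q^j)$ and $\diamcover(D,q^j)$ that you call active and dormant phases, uses the identical identities $n_k+m_k=s_{2k}+a_1$ and $n_{k+1}+m_k=s_{2k+1}+a_1$ to collapse the product to $2^{j+a_1-3}\le\diamcover(C,q^j)\diamcover(D,q^j)\le 2^{j+a_1}$, and then invokes \eqref{product geometry inequality in N} together with Lemmas \ref{lemma - Cantor sets are equi-homogeneous}, \ref{lemma - equi-homogeneous products} and Theorem \ref{theorem - equi-homogeneous Assouad equal to box} for the Assouad conclusion. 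The only cosmetic difference is that the paper works at the discrete scales $\delta=q^j$ with $j$ an integer and then interpolates, whereas you write $\delta=q^r$ with $r$ real; either is fine.
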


\begin{proof}
  Using the terminology of the previous section, the intermediary sets
  $C_{n}$ and $D_{n}$ consist of $2^{n}$ intervals of length
  $L_{n}:= \prod_{i=1}^{n}\lambda_{i}$ and $M_{n}:=
  \prod_{i=1}^{n}\mu_{i}$ respectively.

  We first consider the generalised Cantor set $C$. For
  $n\in\mathbb{N}$ in the range $n_{k}\leq n < n_{k+1}$ all except $k$
  of the $\lambda_{1},\ldots,\lambda_{n}$ are equal to $q$, so
\[
L_{n}=q^{n-k}\prod_{i=1}^{k}q^{a_{2i}+1}=q^n\prod_{i=1}^k q^{a_{2i}}.
\]
Taking logarithms for clarity, we derive
\begin{align*}
  \frac{\log L_{n}}{\log q}&=n-k + \sum_{i=1}^{k}\left(a_{2i}+1\right) = n+\sum_{i=1}^{k}a_{2i} \\
  &= n-\sum_{i=1}^{k}a_{2i-1} +\sum_{i=1}^{k}a_{2i-1} +
  \sum_{i=1}^{k}a_{2i}= n-n_{k}+s_{2k},
\end{align*}
so
\begin{align}
  L_{n}&=q^{n-n_{k}+s_{2k}},& n_{k}&\leq n < n_{k+1},\label{Ln explicit}
\intertext{and, in particular, $L_{n_{k}}=q^{s_{2k}}$ and
$L_{n_{k+1}-1}=q^{s_{2k+1}-1}$. Observe that for $n_{k}\leq n <
n_{k+1}$ the length $L_{n}$ has range
$\left[L_{n_{k+1}-1},L_{n_{k}}\right]=\left[q^{s_{2k+1}-1},q^{s_{2k}}\right]$,
so inverting the relationship \eqref{Ln explicit}, we derive}
q^j&=L_{n_{k}+j-s_{2k}},& s_{2k}&\le j\le s_{2k+1}-1,\notag
\intertext{so, from the cover estimates \eqref{cover inequality},}
  2^{n_{k}+j-s_{2k}-1}&\leq \diamcover(C,q^j)\leq 2^{n_{k}+j-s_{2k}},&  s_{2k}&\le j\le,s_{2k+1}-1. \label{C variable range}
\intertext{Further, observe that \eqref{Ln explicit} yields
$L_{n_{k+1}}=q^{s_{2k+2}}$ and $L_{n_{k+1}-1}=q^{s_{2k+1}-1}$, so}
L_{n_{k+1}}&\leq q^j\leq L_{n_{k+1}-1},& s_{2k+1}-1&\le j\le
s_{2k+2},\notag
\intertext{from which, with the cover estimates \eqref{cover inequality}, we conclude that}
  2^{n_{k+1}-2}&\leq \diamcover(C,q^j)\leq 2^{n_{k+1}},& s_{2k+1}-1&\le j\le,s_{2k+2}.\label{C constant range}
\intertext{A very similar argument shows that for the set $D$ the bounds}
  2^{m_{k}+j-s_{2k+1}-1}&\leq \diamcover(D,q^j) \leq 2^{m_{k}+j-s_{2k+1}},& s_{2k+1}&\le j\le s_{2k+2}-1.\label{D variable range}
\intertext{and}
  2^{m_{k}-2}&\leq \diamcover(D,q^j)\leq 2^{m_{k}},& s_{2k}-1&\le j\le s_{2k+1}.\label{D constant range}
\end{align}
hold.

Now, taking the product of \eqref{C constant range} and \eqref{D
  variable range} we obtain
\begin{equation}
  2^{n_{k+1}+m_{k}+j-s_{2k+1}-3}\leq \diamcover(C,q^j)\diamcover(D,q^j) \leq 2^{n_{k+1}+m_{k}+j-s_{2k+1}}\label{first product}
\end{equation}
for $s_{2k+1}\le j\le s_{2k+2}-1$, and multiplying \eqref{C variable
  range} with \eqref{D constant range} yields
\begin{equation}
  2^{n_{k}+m_{k}+j-s_{2k}-3}\leq \diamcover(C,q^j)\diamcover(D,q^j) \leq 2^{n_{k}+m_{k}+j-s_{2k}} \label{second product}
\end{equation}
for $s_{2k}\le j\le s_{2k+1}-1$.  Finally, since
$n_{k}+m_{k}=s_{2k}+a_{1}$ and $n_{k+1}+m_{k}=s_{2k+1}+a_{1}$, the
bounds \eqref{first product} and \eqref{second product} are precisely
\begin{align}\label{q alpha product bounds}
  2^{j+a_{1}-3}&\leq \diamcover(C,q^j)\diamcover(D,q^j)\leq 2^{j+a_{1}}
\end{align}
for $s_{2k}\le j\le s_{2k+2}-1$ and, as $k\in\mathbb{N}$ was
arbitrary, we see that \eqref{q alpha product bounds} holds for all
$j\geq s_{2}$. It follows that
\begin{equation*}
  2^{a_{1}-3}\delta^{\frac{\log 2}{\log q}}\leq \diamcover(C,\delta)\diamcover(D,\delta)\leq 2^{a_{1}}\delta^{\frac{\log 2}{\log q}},\qquad \text{for all}\ 0<\delta<q^{s_{2}}.
\end{equation*}

Finally, recall from the product inequality \eqref{product geometry
  inequality in N} and the geometric relationships \eqref{geometric inequalities} that for all $\delta>0$
\[
\diamcover(C,8\delta)\diamcover(D,8\delta)\leq \diamcover(C\times D,\delta)\leq
\diamcover(C,\delta/2\sqrt2)\diamcover(D,\delta/2\sqrt2),
\]
whence
\begin{align}\label{final product bounds}
\eta^{-1}\delta^{\frac{\log 2}{\log q}}\leq \diamcover(C\times D,\delta) \leq
\eta\delta^{\frac{\log 2}{\log q}},\qquad \text{for all}\ 0<\delta <q^{s_{2}}.
\end{align}
If $\delta_{0}<q^{s_{2}}$ then \eqref{final product bounds} implies \eqref{product set
  control}, otherwise observe that for $\delta$ in the range $q^{s_{2}}\leq \delta \leq \delta_{0}$ trivially
\[
\diamcover\left(C\times D,\delta_{0}\right)\left(q^{s_{2}}\right)^{-\frac{\log 2}{\log q}}\delta^{\frac{\log 2}{\log q}}\leq \diamcover\left(C\times D,\delta\right)\leq \diamcover\left(C\times D,q^{s_{2}}\right) \delta_{0}^{-\frac{\log 2}{\log q}} \delta^{\frac{\log 2}{\log q}}
\]
which, together with \eqref{final product bounds}, yields \eqref{product set control}.

This immediately shows that the upper and lower box-counting
dimensions coincide, are attained, and are equal to $-\log 2/\log q$. The same
expression for the Assouad dimension then follows using Theorem \ref{theorem - equi-homogeneous Assouad equal to box} and the fact that the product set $C\times D$ is equi-homongeneous, being the product of two equi-homogeneous sets $C$ and $D$ (Lemmas \ref{lemma - equi-homogeneous products} and \ref{lemma - Cantor sets are equi-homogeneous}).
\end{proof}

\begin{theorem}\label{boxdims}
Let the pair of generalised Cantor sets $C$ and $D$ be generated by $\left(q,a\right)$.
  The upper box-counting dimensions of $C$ and $D$ are given by
  \begin{align}
    \dim_{\rm B}C&=-\left(\frac{\log 2}{\log
        q}\right)\limsup_{k\in\mathbb{N}}\frac{\sum_{j=1}^{k}a_{2j-1}}{\sum_{j=1}^{2k-1}a_{i}}\label{dimB
      C bounds}\\ \text{and} \qquad \dim_{\rm B}D &=-\left(\frac{\log 2}{\log
        q}\right)\limsup_{k\in\mathbb{N}}\frac{\sum_{j=1}^{k}a_{2j}}{\sum_{i=1}^{2k}a_{i}}\notag
  \end{align}
  respectively.
\end{theorem}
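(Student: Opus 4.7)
The plan is to apply Lemma \ref{lemma - boxcounting dimension equalities} directly and then evaluate the resulting $\limsup$ using the explicit formula for $L_n=\prod_{i=1}^n\lambda_i$ that was already computed inside the proof of Theorem \ref{productset}. The argument for $D$ is entirely symmetric (with the roles of odd- and even-indexed $a_i$ swapped), so I will only discuss $C$.

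First I would recall from the proof of Theorem \ref{productset} that for $n$ in the range $n_k\leq n<n_{k+1}$ one has $L_n=q^{n-n_k+s_{2k}}$, and since $s_{2k}=n_k+\sum_{j=1}^{k}a_{2j}$ this is the same as $L_n=q^{n+E_k}$, where I write $E_k:=\sum_{j=1}^{k}a_{2j}$. Substituting into Lemma \ref{lemma - boxcounting dimension equalities} gives
\[
\frac{n\log 2}{-\sum_{i=1}^{n}\log\lambda_i}=\frac{n\log 2}{-(n+E_k)\log q}=-\frac{\log 2}{\log q}\cdot\frac{n}{n+E_k}
\qquad\text{for } n_k\leq n<n_{k+1}.
\]

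Next I would observe that, for fixed $k$, the function $n\mapsto n/(n+E_k)$ is strictly increasing in $n$, so on the block $\{n_k,n_k+1,\ldots,n_{k+1}-1\}$ its supremum is attained at $n=n_{k+1}-1$. At that endpoint the denominator equals $n_{k+1}-1+E_k=s_{2k+1}-1$ (using $n_{k+1}=n_k+a_{2k+1}$ and $s_{2k+1}=s_{2k}+a_{2k+1}$). Hence the per-block maximum of the ratio is
\[
\frac{n_{k+1}-1}{s_{2k+1}-1}=\frac{\sum_{j=1}^{k+1}a_{2j-1}-1}{\sum_{i=1}^{2k+1}a_i-1}.
\]
Since the blocks $[n_k,n_{k+1})$ partition $\mathbb{N}$ and $n_k\to\infty$ (as the $a_i$ are positive integers), the $\limsup$ over $n\in\mathbb{N}$ equals the $\limsup$ over $k$ of these per-block maxima. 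The $-1$'s in numerator and denominator are negligible in the $\limsup$ because $s_{2k+1}\to\infty$, so after reindexing $k+1\mapsto k$ one obtains exactly
\[
\dim_{\rm B}C=-\frac{\log 2}{\log q}\limsup_{k\to\infty}\frac{\sum_{j=1}^{k}a_{2j-1}}{\sum_{i=1}^{2k-1}a_i},
\]
as claimed.

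There is no serious obstacle: the computation is essentially bookkeeping, reusing the identity $L_n=q^{n+E_k}$ from Theorem \ref{productset}. The only point that requires a moment's care is identifying the correct endpoint of each block at which the ratio $n/(n+E_k)$ is maximised, together with the harmless observation that subtracting $1$ from a sequence tending to infinity does not change its $\limsup$ after division.
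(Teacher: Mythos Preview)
Your proposal is correct and follows essentially the same route as the paper's proof: both invoke Lemma~\ref{lemma - boxcounting dimension equalities}, use the identity $L_n=q^{\,n-n_k+s_{2k}}$ from the proof of Theorem~\ref{productset}, and exploit the monotonicity of $n\mapsto n/(n+E_k)$ on each block $n_k\le n<n_{k+1}$. The only cosmetic difference is that the paper separates the argument into an upper bound (bounding the block by its value at $n_{k+1}$) and a lower bound (along the subsequence $n_{k+1}-1$), whereas you take the exact block maximum at $n_{k+1}-1$ in one step and then discard the $-1$'s; the content is the same.
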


\begin{proof}
  Recall from Lemma \ref{lemma - boxcounting dimension equalities}
  that the upper-box counting dimensions of $C$ and $D$ are given by
  \begin{align*}
    \dim_{\rm B}C&=\limsup_{n\in\mathbb{N}}\frac{n\log 2}{-\log L_{n}}
    &\text{and}\qquad \dim_{\rm B}D&=\limsup_{n\in\mathbb{N}}\frac{n\log
      2}{-\log M_{n}}.
  \end{align*}
  We first consider the generalised Cantor set $C$. For
  $n\in\mathbb{N}$ in the range $n_{k}\leq n <n_{k+1}$ we obtain from
  \eqref{Ln explicit}
  \begin{align*}
    \frac{n\log 2}{-\log L_{n}} = \frac{n \log 2}{-\left(n-n_{k}+s_{2k}\right)\log q} &\leq -\left(\frac{\log 2}{\log q}\right)\frac{n_{k+1}}{s_{2k+1}}\\
    &= -\left(\frac{\log 2}{\log
        q}\right)\frac{\sum_{i=1}^{k+1}a_{2j-1}}{\sum_{i=1}^{2k+1}a_{i}},
  \end{align*}
  where we have used the fact that $n/(a+n)$ is increasing in $n$ for
  $a>0$. Taking the limit superior as $n$ (and hence $k$) tend to
  infinity we conclude that
  \begin{align*}
    \dim_{\rm B}C&\leq -\left(\frac{\log 2}{\log
        q}\right)\limsup_{k\in\mathbb{N}}\frac{\sum_{i=1}^{k+1}a_{2j-1}}{\sum_{i=1}^{2k+1}a_{i}},
  \end{align*}
  which is the upper bound in \eqref{dimB C bounds}.  To establish the
  lower bound we consider the subsequence $n_{k+1}-1$ and recall from
  \eqref{Ln explicit} that
  $L_{n_{k+1}-1}=q^{s_{2k+1}-1}$. Consequently,
  \begin{align*}
    \frac{\left(n_{k+1}-1\right)\log 2}{-\log L_{n_{k+1}-1}} =
    \frac{\left(n_{k+1}-1\right)\log 2}{-\left(s_{2k+1}-1\right)\log
      q}=-\left(\frac{\log 2}{\log
        q}\right)\frac{\sum_{j=1}^{k+1}a_{2j-1}-1}{\sum_{i=1}^{2k+1}a_{i}-1},
  \end{align*}
  so
  \begin{align*}
    \dim_{\rm B}C =\limsup_{n\in\mathbb{N}} \frac{n\log 2}{-\log L_{n}} &\geq \limsup_{k\in\mathbb{N}}\frac{\left(n_{k+1}-1\right)\log 2}{-\log L_{n_{k+1}-1}}\\
    &= -\left(\frac{\log 2}{\log q}\right) \limsup_{k\in\mathbb{N}}
    \frac{\sum_{j=1}^{k+1}a_{2j-1}}{\sum_{i=1}^{2k+1}a_{i}}.
  \end{align*}
  Since these upper and lower bounds coincide we obtain the equality
  in (\ref{dimB C bounds}).

  The argument for $D$ follows similar lines.\end{proof}

In general the Assouad dimension dominates the upper box-counting dimension, so the above theorem provides lower bounds for the Assouad dimension of the sets $C$ and $D$.
However, using the results of the previous section, we can precisely determine the Assouad dimension of the sets $C$ and $D$ provided that the odd and even terms of the sequence $\set{a_{i}}$ respectively are unbounded.

\begin{theorem}\label{theorem - assouad dims}
If the generalised Cantor sets $\left(C,D\right)$ are generated by $\left(q,a\right)$ then
\begin{align*}
\sup\set{a_{2i-i}}=\infty\qquad &\text{implies} \qquad \dim_{\rm A}C=-\tfrac{\log 2}{\log q},\\
\text{and}\qquad \sup\set{a_{2i}}=\infty\qquad &\text{implies} \qquad \dim_{\rm A}D=-\tfrac{\log 2}{\log q}.
\end{align*}
\end{theorem}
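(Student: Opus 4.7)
The plan is a direct application of Lemmas \ref{lemma - Assouad upper bound} and \ref{lemma - Assouad lower bound}, with $\lambda = q$ in both cases.

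For the upper bound on $\dim_{\rm A}C$, I would observe that every generator $\lambda_i$ of $C$ is either $q$ or of the form $q^{a_{2k}+1}$ for some $k$. Since the $a_i$ are positive integers we have $a_{2k}+1 \geq 2$, and since $q \in (0,1/2)$ this gives $q^{a_{2k}+1} \leq q^2 < q$. Thus $\lambda_i \leq q$ for every $i$, and Lemma \ref{lemma - Assouad upper bound} yields $\dim_{\rm A}C \leq -\log 2 / \log q$.

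For the lower bound I would identify the runs of consecutive $q$-valued generators in $\{\lambda_i\}$. By construction the ``small'' values $q^{a_{2k}+1}$ occur precisely at the indices $n_k = \sum_{j=1}^{k} a_{2j-1}$ and nowhere else, so (setting $n_0 := 0$ for uniformity) between consecutive special indices $n_{k-1}$ and $n_{k}$ there is a block of exactly $n_{k} - n_{k-1} - 1 = a_{2k-1} - 1$ consecutive positions at which $\lambda_i = q$. Under the hypothesis $\sup_i a_{2i-1} = \infty$ these block lengths are unbounded, so for each $m \in \mathbb{N}$ we may choose $k = k(m)$ with $a_{2k-1} \geq m + 1$ and set $j_m := n_{k-1}$; then $\lambda_{j_m+1} = \cdots = \lambda_{j_m+m} = q \geq q$. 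Lemma \ref{lemma - Assouad lower bound} applied with $\lambda = q \in (0,1/2)$ then gives $\dim_{\rm A}C \geq -\log 2 / \log q$, and combining the two bounds yields the claimed equality.

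The argument for $D$ is identical after interchanging the roles of the odd- and even-indexed entries of $\{a_i\}$: the small generators of $\{\mu_i\}$ sit at the indices $m_k = a_1 + \sum_{j=1}^{k} a_{2j}$, and the hypothesis $\sup_i a_{2i} = \infty$ supplies arbitrarily long runs of $\mu_i = q$, to which Lemmas \ref{lemma - Assouad upper bound} and \ref{lemma - Assouad lower bound} apply as above. No step here is genuinely difficult; the only care required is in correctly locating the small generators within each sequence and checking that the lengths of the maximal runs of $q$-valued generators are controlled by the odd-indexed (respectively even-indexed) terms of $a$, which is immediate from the definitions of $n_k$ and $m_k$.
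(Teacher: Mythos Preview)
Your proof is correct and follows essentially the same approach as the paper: both apply Lemma~\ref{lemma - Assouad upper bound} with $\lambda=q$ (since every $\lambda_i\le q$) for the upper bound, and Lemma~\ref{lemma - Assouad lower bound} for the lower bound by exhibiting arbitrarily long runs of consecutive generators equal to $q$ located between successive special indices $n_{k-1}$ and $n_k$. The only cosmetic difference is that you explicitly compute the run length as $a_{2k-1}-1$ and set $n_0:=0$, whereas the paper simply picks $a_{2j_k-1}>k$ and reads off $k$ consecutive $q$'s directly.
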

\begin{proof}
%As $\lambda_{i}\leq q$ for all $i\in\mathbb{N}$ it follows from Lemma \ref{lemma - Assouad upper bound} that $\dim_{\rm A}C\leq -\log 2/\log q$.
If the sequence $a_{2j-1}$ is unbounded then there exists a subsequence $a_{2j_{k}-1}$ such that $a_{2j_{k}-1}>k$ for all $k\in\mathbb{N}$.
It follows that $n_{j_{k}-1}+k<n_{j_{k}-1}+a_{2j_{k}-1}=n_{j_{k}}$, so from the definition of the $\lambda_{i}$ we conclude that the $k$ consecutive generators
\[
\lambda_{i}=q \quad\text{for}\quad i=n_{j_{k}-1}+1,\ldots,n_{j_{k}-1}+k.
\]
It follows from Lemma \ref{lemma - Assouad lower bound} that $\dim_{\rm A}C\geq -\log 2/\log q$. The opposite inequality follows from Lemma \ref{lemma - Assouad upper bound} as $\lambda_{i}\leq q$ for all $i\in\mathbb{N}$, so we conclude that $\dim_{\rm A}C=-\log 2/\log q$.
The argument for the set $D$ follows similar lines.
\end{proof}

In summary we have constructed generalised Cantor sets $C$ and $D$
such that
\begin{align*}
\dim_B C&= -\frac{\log 2}{\log q}\limsup_{k\in\mathbb{N}}\frac{\sum_{i=1}^{k+1}a_{2j-1}}{\sum_{i=1}^{2k+1}a_{i}},\\
\dim_B D&= -\frac{\log 2}{\log
      q}\limsup_{k\in\mathbb{N}}\frac{\sum_{j=1}^{k+1}a_{2j}}{\sum_{i=1}^{2k+2}a_{i}},\\
\dim_{\rm A}\left(C\times D\right)&=\dim_B(C\times D)=
  -\frac{\log 2}{\log q},\\
  \dim_{\rm A}C&=-\frac{\log 2}{\log q} \qquad\text{if $\set{a_{2i-1}}$ is unbounded,}\\
 \text{and}\qquad \dim_{\rm A}D&=-\frac{\log 2}{\log q} \qquad\text{if $\set{a_{2i}}$ is unbounded.}
\end{align*}

By choosing the $\{a_i\}$ appropriately we can now produce generalised
Cantor sets $C$ and $D$ such that
\begin{align*}
\dim_{\rm A}C=\dim_{\rm A}D=\dim_{\rm A}\left(C\times D\right) = \dim_{\rm B}\left(C\times D\right)=\dim_{\rm LB}\left(C\times D\right),
\end{align*}
where the box-counting dimensions of these sets take arbitrary values satisfying the product formula
\begin{align*}
\dim_{\rm B}C,\dim_{\rm B}D&\leq \dim_{\rm B}\left(C\times D\right) \leq \dim_{\rm B}C+\dim_{\rm B}D,
\end{align*}
subject to the restrictions that
\begin{align}
0&<\dim_{\rm B}C,\dim_{\rm B}D<\dim_{\rm B}\left(C\times D\right)<1.\label{boxdims restrictions}
\end{align}
In particular the Assouad dimension of the product satisfies
\[
\dim_{\rm A}\left(C\times D\right)<\dim_{\rm A}C+\dim_{\rm A}D = 2\dim_{\rm A}\left(C\times D\right)
\]
so there is a strict inequality in the Assouad dimension product formula \eqref{Assouad product formula}. Further, these sets give extreme examples of strict inequality in the product formula as, in general, $\dim_{\rm A}F+\dim_{\rm A}G\leq 2\dim_{\rm A}\left(F\times G\right)$ for arbitrary sets $F,G$.

\begin{lemma}\label{lemma - construction limit}
Let $\alpha,\beta \in \left(0,1\right)$. There exist generalised Cantor sets $C$ and $D$ such that
\[
\dim_{\rm LB}C=\dim_{\rm B}C=\alpha\beta,\qquad \dim_{\rm LB}D=\dim_{\rm B}D=\alpha\left(1-\beta\right),
\]
and
\begin{align*}
\dim_{\rm LB}\left(C\times D\right)=\dim_{\rm B}\left(C\times D\right)=\dim_{\rm A}\left(C\times D\right)=\dim_{\rm A}C=\dim_{\rm A}D=\alpha.
\end{align*}
\end{lemma}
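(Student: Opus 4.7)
The plan is to choose $q=2^{-1/\alpha}\in(0,1/2)$, so that $-\log 2/\log q=\alpha$, and then select a sequence $\{a_i\}$ such that the pair of Cantor sets $(C,D)$ generated by $(q,a)$ in the sense of Section \ref{section - construction} has the desired dimensions. With $q$ chosen in this way, Theorem \ref{productset} immediately gives $\dim_{\rm LB}(C\times D)=\dim_{\rm B}(C\times D)=\dim_{\rm A}(C\times D)=\alpha$, so only the individual Assouad and box-counting dimensions of $C$ and $D$ remain to be controlled.

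I propose to take $a_{2k-1}=\max\{\lfloor\beta k\rfloor,1\}$ and $a_{2k}=\max\{\lceil(1-\beta)k\rceil,1\}$. Since $\beta,1-\beta\in(0,1)$, both sub-sequences $\{a_{2k-1}\}$ and $\{a_{2k}\}$ tend to infinity, so Theorem \ref{theorem - assouad dims} yields $\dim_{\rm A}C=\dim_{\rm A}D=-\log 2/\log q=\alpha$. Moreover, for $j$ large enough that $\beta j\ge 1$ and $(1-\beta)j\ge 1$, the identity $\lceil(1-\beta)j\rceil=j-\lfloor\beta j\rfloor$ gives $a_{2j-1}+a_{2j}=j$ exactly, so $s_{2k}=\tfrac{k(k+1)}{2}+O(1)$ and $n_k=\sum_{j=1}^{k}\lfloor\beta j\rfloor=\tfrac{\beta k(k+1)}{2}+O(k)$; in particular $n_k/s_{2k}\to\beta$.

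For $C$, Lemma \ref{lemma - boxcounting dimension equalities} combined with the formula $L_n=q^{n-n_k+s_{2k}}$ for $n_k\le n<n_{k+1}$ (derived in the proof of Theorem \ref{productset}) reduces the box-counting dimension computation to finding $\lim_n n/(n-n_k+s_{2k})$. Writing $n=n_k+j$ with $0\le j<a_{2k+1}=O(k)$, the quotient $(n_k+j)/(s_{2k}+j)$ is monotone in $j$ (because $n_k<s_{2k}$), and since $a_{2k+1}/s_{2k}=O(1/k)\to 0$, its value at the upper endpoint $j=a_{2k+1}-1$ differs from $n_k/s_{2k}$ by $O(1/k)$. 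The ratio therefore has a genuine limit equal to $\beta$, yielding $\dim_{\rm LB}C=\dim_{\rm B}C=\alpha\beta$; the argument for $D$ is symmetric and gives $\dim_{\rm LB}D=\dim_{\rm B}D=\alpha(1-\beta)$. The main obstacle is balancing two competing constraints on $\{a_i\}$: the sub-sequences $\{a_{2k-1}\}$ and $\{a_{2k}\}$ must be unbounded (to force $\dim_{\rm A}C=\dim_{\rm A}D=\alpha$), yet each $a_k$ must be negligible compared with $\sum_{j\le k}a_j$ (so that the box-counting ratios converge as genuine limits rather than merely having the prescribed limsups); linear growth is the simplest regime in which both conditions hold.
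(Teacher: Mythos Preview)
Your approach is essentially the same as the paper's: the paper also takes $q=2^{-1/\alpha}$ with $a_{2k-1}=\lceil\beta k\rceil$ and $a_{2k}=\lceil(1-\beta)k\rceil$, then invokes Theorems \ref{productset}, \ref{boxdims}, and \ref{theorem - assouad dims} in exactly the way you describe. Your floor/ceiling pairing (forcing $a_{2j-1}+a_{2j}=j$ exactly for large $j$) is a mild simplification, and your direct verification that $n\log 2/(-\log L_n)$ has a genuine limit is slightly cleaner than the paper's route, which computes only the $\limsup$ via Theorem \ref{boxdims} and then recovers $\dim_{\rm LB}C$ and $\dim_{\rm LB}D$ indirectly from the product-inequality chain \eqref{product inequality chain}.
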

\begin{proof}
Define the sequence $a=\set{a_{i}}$ by $a_{2k-1}=\left\lceil\beta k\right\rceil$ and $a_{2k}=\left\lceil\left(1-\beta\right)k\right\rceil$ where the ceiling function $\left\lceil x \right\rceil$ is the smallest integer greater than or equal to $x$. Clearly $\beta,1-\beta>0$ so the $a_{i}$ are positive integers. Let the pair of generalised Cantor sets $C$ and $D$ be generated by $\left(2^{-1/\alpha},a\right)$, so
immediately from Theorem \ref{productset} we obtain
\begin{align*}
\dim_{\rm LB}\left(C\times D\right)=\dim_{\rm B}\left(C\times D\right)=\dim_{\rm A}\left(C\times D\right)=\alpha
\end{align*}
as required. Further, as both the odd and even terms $a_{2i-1}$ and $a_{2i}$ are unbounded we obtain $\dim_{\rm A}C=\dim_{\rm A}D=\alpha$ from Theorem \ref{theorem - assouad dims}.

Next, observe that
\begin{align*}
\tfrac{1}{2}k\left(k+1\right)\left(1-\beta\right)&\leq \sum_{j=1}^{k} a_{2j} \leq \tfrac{1}{2}k\left(k+1\right)\left(1-\beta\right) + k\\
\text{and}\qquad\tfrac{1}{2}k\left(k+1\right)\beta &\leq \sum_{j=1}^{k} a_{2j-1} \leq \tfrac{1}{2}k\left(k+1\right)\beta + k.
\end{align*}
Consequently,
\begin{align*}
\frac{\sum_{j=1}^{k}a_{2j-1}}{\sum_{j=1}^{2k-1}a_{i}} &\leq \frac{\tfrac{1}{2}k\left(k+1\right)\beta + k}{\tfrac{1}{2}k\left(k-1\right)\left(1-\beta\right) + \tfrac{1}{2}k\left(k+1\right)\beta}\\
&=\frac{\left(k+1\right)\beta +2}{k-1+2\beta}\rightarrow \beta
\intertext{as $k\rightarrow \infty$, while}
\frac{\sum_{j=1}^{k}a_{2j-1}}{\sum_{j=1}^{2k-1}a_{i}} &\geq \frac{\tfrac{1}{2}k\left(k+1\right)\beta}{\tfrac{1}{2}k\left(k-1\right)\left(1-\beta\right) + k-1 + \tfrac{1}{2}k\left(k+1\right)\beta + k}\\
&=\frac{k\left(k+1\right)\beta}{k^{2}+k-1+2k\beta}\rightarrow \beta
\end{align*}
as $k\rightarrow \infty$. It follows from Theorem \ref{boxdims} that
$\dim_{\rm B}C=\alpha\beta$ as required, and from a similar argument we obtain $\dim_{\rm B}D=\alpha\left(1-\beta\right)$.
Finally, observe that from the chain of product inequalities \eqref{product inequality chain} we obtain
\begin{align*}
\dim_{\rm LB}C+\dim_{\rm B}D=\dim_{\rm B}C+\dim_{\rm LB}D=\dim_{\rm B}\left(C\times D\right),
\end{align*}
which implies that $\dim_{\rm LB}C=\alpha\beta$ and $\dim_{\rm LB}D=\alpha\left(1-\beta\right)$.
\end{proof}

The previous lemma is a limiting case of the following more general construction, which gives independent control over the box-counting dimensions of $C$ and $D$.

\begin{lemma}\label{lemma - construction}
Let $\alpha,\beta,\gamma \in\left(0,1\right)$ be such that $\beta+\gamma>1$. There exist generalised Cantor sets $C$ and $D$ such that
\begin{align*}
\dim_{\rm LB}C&=\alpha\left(1-\gamma\right),&\dim_{\rm B}C&=\alpha\beta,\\
\dim_{\rm LB}D&=\alpha\left(1-\beta\right),& \dim_{\rm B}D&=\alpha\gamma,
\end{align*}
and
\begin{align*}
\dim_{\rm LB}\left(C\times D\right)=\dim_{\rm B}\left(C\times D\right)=\dim_{\rm A}\left(C\times D\right)=\dim_{\rm A}C=\dim_{\rm A}D=\alpha.
\end{align*}
\end{lemma}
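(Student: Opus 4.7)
The plan is to generalise the construction of Lemma~\ref{lemma - construction limit} by replacing its single-rule definition of $\{a_i\}$ with an alternating two-phase scheme, then reading off the upper box-counting dimensions of $C$ and $D$ from Theorem~\ref{boxdims} and deducing everything else from Theorems~\ref{productset} and~\ref{theorem - assouad dims} together with the product inequality chain~\eqref{product inequality chain}. Set $q = 2^{-1/\alpha}$, pick positive integers $0 = T_0 < T_1 < T_2 < \cdots$ growing so fast that $T_{\ell-1}^2/T_\ell^2 \to 0$ (for definiteness take $T_\ell = 2^{\ell^2}$), partition $\mathbb{N}$ into phases $P_\ell = \{T_{\ell-1}+1,\ldots,T_\ell\}$, and for each $k \in P_\ell$ set
\[
(a_{2k-1},\,a_{2k}) = \begin{cases} \bigl(\lceil \beta k \rceil,\, \lceil (1-\beta)k \rceil\bigr), & \ell \text{ odd,}\\[2pt] \bigl(\lceil (1-\gamma)k \rceil,\, \lceil \gamma k \rceil\bigr), & \ell \text{ even.}\end{cases}
\]
All $a_i$ are positive integers, and within every phase each of $a_{2k-1}$ and $a_{2k}$ grows linearly in $k$, so both $\{a_{2i-1}\}$ and $\{a_{2i}\}$ are unbounded. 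Let $(C,D)$ be the pair of generalised Cantor sets generated by $(q,a)$.

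Theorem~\ref{productset} immediately gives $\dim_{\rm LB}(C\times D) = \dim_{\rm B}(C\times D) = \dim_{\rm A}(C\times D) = -\log 2/\log q = \alpha$, and the unboundedness of both subsequences combined with Theorem~\ref{theorem - assouad dims} gives $\dim_{\rm A} C = \dim_{\rm A} D = \alpha$.

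The core step is to show $\dim_{\rm B} C = \alpha\beta$ and $\dim_{\rm B} D = \alpha\gamma$ via Theorem~\ref{boxdims}. The rapid growth $T_{\ell-1}^2/T_\ell^2 \to 0$ makes the sums accumulated before phase~$\ell$ asymptotically negligible compared to the phase-$\ell$ contribution $\sum_{k \in P_\ell} k \sim T_\ell^2/2$. The sum of odd-indexed entries $\sum_{k \in P_\ell} a_{2k-1}$ is then asymptotic to $\beta\cdot T_\ell^2/2$ when $\ell$ is odd and to $(1-\gamma)\cdot T_\ell^2/2$ when $\ell$ is even, while the phase-$\ell$ total $\sum_{k\in P_\ell}(a_{2k-1}+a_{2k})$ is asymptotic to $T_\ell^2/2$. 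Hence $n_{T_\ell}/s_{2T_\ell-1}\to\beta$ along odd~$\ell$ and $\to 1-\gamma$ along even~$\ell$. A direct calculation of the derivative with respect to the cumulative phase variable shows that within each phase $n_k/s_{2k-1}$ is monotone in $k$, moving from the previous phase's limiting value toward the current one, so for all large~$k$ the ratio lies in $[1-\gamma,\beta]$. Therefore $\limsup_k n_k/s_{2k-1} = \beta$, and the symmetric calculation with the even-indexed entries gives $\limsup_k m_k/s_{2k} = \gamma$; Theorem~\ref{boxdims} then yields $\dim_{\rm B} C = \alpha\beta$ and $\dim_{\rm B} D = \alpha\gamma$. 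I expect this bookkeeping, with its ceilings and cross-phase residuals, to be the main technical step, though it is essentially routine.

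Finally, the lower box-counting dimensions come for free from the chain~\eqref{product inequality chain}. Since $\dim_{\rm LB}(C\times D) = \dim_{\rm B}(C\times D) = \alpha$, the sandwiched quantities $\dim_{\rm LB} C + \dim_{\rm B} D$ and $\dim_{\rm B} C + \dim_{\rm LB} D$ are both forced to equal $\alpha$, whence $\dim_{\rm LB} C = \alpha(1-\gamma)$ and $\dim_{\rm LB} D = \alpha(1-\beta)$, as required.
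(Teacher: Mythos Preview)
Your construction is correct and takes a genuinely different route from the paper's. The paper defines the sequence $\{a_i\}$ recursively so that the ratio $o_{k+1}/(o_{k+1}+e_k)$ is forced to be almost exactly $\beta$ (and similarly $e_{k+1}/(o_{k+1}+e_{k+1})$ almost exactly $\gamma$) at every step; the condition $\beta+\gamma>1$ is then what guarantees that the recursively defined $a_i$ are positive and unbounded. Your alternating-phase construction is more geometric: it makes the oscillation of $n_k/s_{2k-1}$ between the two target values $1-\gamma$ and $\beta$ explicit, and the role of $\beta+\gamma>1$ is simply that $1-\gamma<\beta$ and $1-\beta<\gamma$, so the two phases pull the ratio in opposite directions. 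Both proofs then invoke the same machinery (Theorems~\ref{productset}, \ref{boxdims}, \ref{theorem - assouad dims} and the chain~\eqref{product inequality chain}) in identical fashion.

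One small point to tighten: your monotonicity claim (``a direct calculation of the derivative\ldots'') is a continuous heuristic that need not survive the ceilings exactly. You don't actually need monotonicity; a cleaner way to get $\limsup_k n_k/s_{2k-1}\le\beta$ is the uniform bound $a_{2j-1}\le\lceil\beta j\rceil$ for \emph{all} $j$ (since $1-\gamma<\beta$), which gives $n_k\le \beta\,k(k+1)/2+k$, while $a_{2j-1}+a_{2j}\ge j$ in every phase gives $s_{2k-1}\ge k(k+1)/2-O(k)$, so $n_k/s_{2k-1}\le\beta+O(1/k)$. The analogous bound $a_{2j}\le\lceil\gamma j\rceil$ handles $D$. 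This replaces the monotonicity step with a two-line estimate and makes the ``routine bookkeeping'' you anticipate entirely painless.
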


\begin{proof}
We first observe that $\tfrac{\beta}{1-\beta},\tfrac{\gamma}{1-\gamma}>0$ and that
\begin{align}\label{betagamma}
\frac{\gamma\beta}{\left(1-\gamma\right)\left(1-\beta\right)}> 1
\end{align}
follows from $\beta+\gamma> 1$.

Now recursively define the sequence $\set{a_{i}}$ by
\begin{align}
a_{1}&=1,\notag\\
a_{2}&=\left\lceil \tfrac{\gamma}{1-\gamma} \right\rceil +1,\notag\\
a_{2k+1}&=\left\lceil \tfrac{\beta}{1-\beta} e_{k} - o_{k} \right\rceil+1,\label{odd ai def}\\
\text{and}\qquad a_{2k+2}&=\left\lceil \tfrac{\gamma}{1-\gamma} o_{k+1} - e_{k} \right\rceil+1 \label{even ai def}
\end{align}
for $k\in\mathbb{N}$, where $o_{k}=\sum_{j=1}^{k}a_{2j-1}$ and $e_{k}=\sum_{j=1}^{k}a_{2j}$ are the sums of the odd and of the even terms of $a_{i}$ respectively.
Observe that
\begin{align*}
a_{2k+2} \geq \tfrac{\gamma}{1-\gamma}o_{k+1}-e_{k}+1 &= \tfrac{\gamma}{1-\gamma}\left(a_{2k+1}+o_{k}\right)-e_{k}+1\\
&\geq \tfrac{\gamma}{1-\gamma}\left(\tfrac{\beta}{1-\beta}e_{k}+1\right)-e_{k}\\
&=\left(\tfrac{\gamma\beta}{\left(1-\gamma\right)\left(1-\beta\right)}-1\right)e_{k}+\tfrac{\gamma}{1-\gamma}
\end{align*}
and similarly
\begin{align*}
a_{2k+3} \geq \left(\tfrac{\gamma\beta}{\left(1-\gamma\right)\left(1-\beta\right)}\right)o_{k+1} + \tfrac{\beta}{1-\beta},
\end{align*}
from which, with \eqref{betagamma}, a straightforward inductive argument shows that the $a_{i}$ are positive integers with unbounded odd and even terms.

Now, let the pair of generalised Cantor sets $\left(C,D\right)$ be generated from $\left(2^{-1/\alpha},a\right)$. From Theorem \ref{productset} we obtain
\begin{align*}
\dim_{\rm LB}\left(C\times D\right)=\dim_{\rm B}\left(C\times D\right)=\dim_{\rm A}\left(C\times D\right)=\alpha
\end{align*}
and from Theorem \ref{theorem - assouad dims} that $\dim_{\rm A}C=\dim_{\rm A}D=\alpha$ as required. Further, from Theorem \ref{boxdims},
\begin{align*}
\dim_{\rm B}C&=\alpha\limsup_{k\in\mathbb{N}}\frac{\sum_{i=1}^{k+1}a_{2j-1}}{\sum_{i=1}^{2k+1}a_{i}}\notag\\
&=\alpha\limsup_{k\in\mathbb{N}}\frac{o_{k+1}}{o_{k+1}+e_{k}}=\alpha\limsup_{k\in\mathbb{N}}\frac{1}{1+\frac{e_{k}}{o_{k+1}}}
\end{align*}

and from \eqref{odd ai def} it follows that
\[
\frac{1-\beta}{\beta}\frac{e_{k}}{e_{k}+2\frac{1-\beta}{\beta}}\le\frac{e_k}{o_{k+1}}\le \frac{1-\beta}{\beta}\frac{e_{k}}{e_{k}+\frac{1-\beta}{\beta}},
\]
so we conclude that $\dim_{\rm B}C=\alpha\beta$. A similar argument using \eqref{even ai def} shows that $\dim_{\rm B}D=\alpha\gamma$.
As in Lemma \ref{lemma - construction limit} the lower box-counting dimensions are obtained from the chain of dimension inequalities \eqref{product inequality chain}.
\end{proof}

In conclusion we have demonstrated that the class of generalised Cantor sets include natural, elementary examples of sets for which the Assouad dimension product inequality is strict and maximal in the sense that the upper bound
\[
\dim_{\rm A}\left(C\times D\right)\leq\dim_{\rm A}C+\dim_{\rm A}D\leq 2\dim_{\rm A}\left(C\times D\right)
\]
is actually an equality. Further, inside this class of sets are examples that, in addition, have box-counting dimensions with arbitrary values satisfying
\begin{align*}
\dim_{\rm B}C,\dim_{\rm B}D&\leq \dim_{\rm B}\left(C\times D\right) \leq \dim_{\rm B}C+\dim_{\rm B}D,
\end{align*}
subject to the restrictions \eqref{boxdims restrictions}.

\appendix
\section{Box-counting dimensions of self-products}

The following product dimension equality is interesting, particularly
in the light of the parallel result for the Assouad dimension
presented here in Lemma \ref{Adimprod}. However, since it falls
outside the main scope of this paper we give it in this brief
appendix.

\begin{lemma}
Let $\left(X,\rmd_{X}\right)$ be a metric space and equip the product space $X\times X$ with a metric satisfying \eqref{product metric condition}.
  For all totally bounded sets $F\subset X$
  \begin{align*}
    \dim_{\rm B}\left(F\times F\right) &=2 \dim_{\rm B}F\\
    \text{and}\qquad \dim_{\rm LB}\left(F\times F\right) &= 2\dim_{\rm LB}F.
  \end{align*}
\end{lemma}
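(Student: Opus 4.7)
The plan is to specialise the product geometry inequality \eqref{product geometry inequality in N} to the diagonal case $G=F$, yielding the sandwich
\[
\cover(F, 4\delta/m_1)^{2} \leq \cover(F \times F, \delta) \leq \cover(F, \delta/m_2)^{2}.
\]
Taking logarithms and dividing by $-\log \delta > 0$ produces
\[
\frac{2\log\cover(F, 4\delta/m_1)}{-\log\delta} \leq \frac{\log\cover(F \times F, \delta)}{-\log\delta} \leq \frac{2\log\cover(F, \delta/m_2)}{-\log\delta},
\]
and the goal is to show that both outer expressions have $\liminf = 2\dim_{\rm LB}F$ and $\limsup = 2\dim_{\rm B}F$ as $\delta\to 0^{+}$, after which a squeeze argument delivers both equalities simultaneously.

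The key observation is that scaling $\delta$ by a positive constant $c$ leaves the logarithmic rate unchanged in the limit: from $-\log(c\delta) = -\log\delta - \log c$ and $-\log\delta\to\infty$ we have $-\log(c\delta)/(-\log\delta)\to 1$, so the substitution $\eta = c\delta$ yields
\[
\liminf_{\delta\to 0^{+}}\frac{\log\cover(F, c\delta)}{-\log\delta} = \dim_{\rm LB}F \quad\text{and}\quad \limsup_{\delta\to 0^{+}}\frac{\log\cover(F, c\delta)}{-\log\delta} = \dim_{\rm B}F.
\]
Multiplying by the positive constant $2$, which commutes with both $\liminf$ and $\limsup$, then gives the claimed outer values. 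Applied once for $c = 4/m_{1}$ and once for $c = 1/m_{2}$, this squeezes the middle term and produces the two stated equalities at once.

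The essential feature that makes this work for self-products, but breaks for general products, is that the logarithmic rate on both outer bounds depends only on a single function $\cover(F,\cdot)$ scaled by the constant factor $2$, so $\liminf$ and $\limsup$ pass cleanly through. For a genuine product $\cover(F,\delta)\cover(G,\delta)$ with $F\neq G$, the rate becomes $[\log\cover(F,\delta) + \log\cover(G,\delta)]/(-\log\delta)$ and the two summands may oscillate out of phase, so the $\liminf$ of the sum can strictly exceed the sum of the individual $\liminf$s --- precisely the phenomenon that produces the nontrivial chain \eqref{product inequality chain}. I foresee no genuine obstacle; the proof is essentially the one displayed sandwich plus the scaling-invariance observation, with the constants $m_{1}, m_{2}, 4$ all absorbed into the limit.
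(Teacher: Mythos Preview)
Your proof is correct and follows essentially the same approach as the paper: the same sandwich inequality \eqref{product geometry inequality in N}, the same passage to logarithmic rates, and the same scaling-invariance observation that the constants $4/m_1$ and $1/m_2$ wash out in the limit. The only cosmetic difference is that the paper carries a general $G$ through to the identities \eqref{product supremum equality}--\eqref{product infimum equality} before setting $G=F$, whereas you specialise to $G=F$ at the outset; the substance is identical.
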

\begin{proof}
  Let $F,G\subset X$ be totally bounded sets. Recall from \eqref{product geometry inequality in N} that for all
  $\delta>0$

\begin{align*}
  \cover(F,4\delta/m_{1})\cover(G,4\delta/m_{1})\leq \cover(F\times G,\delta)\leq
  \cover(F,\delta/m_{2})\cover(G,\delta/m_{2})
\end{align*}

Consequently,
\begin{align*}
  \frac{\log \cover(F\times G,\delta)}{-\log \delta} &\leq \frac{\log \cover(F,\delta/m_{2})}{-\log \delta} + \frac{\log \cover(G,\delta/m_{2})}{-\log \delta}\notag\\
  &= \frac{\log \cover(F,\delta/m_{2})}{-\log
    \left(\delta/m_{2}\right) + \log\left(m_{2}\right)} +
  \frac{\log \cover(G,\delta/m_{2})}{-\log \left(\delta/m_{2}\right)+
    \log\left(m_{2}\right)}
  \intertext{and}
  \frac{\log \cover(F\times G,\delta)}{-\log \delta} &\geq \frac{\log \cover(F,4\delta/m_{1})}{-\log \delta} + \frac{\log \cover(G,4\delta/m_{1})}{-\log \delta}\notag\\
  &= \frac{\log \cover(F,4\delta/m_{1})}{-\log \left(4\delta/m_{1}\right)+\log\left(m_{1}/4\right) } +
  \frac{\log \cover(G,4\delta/m_{1}\delta)}{-\log \left(4\delta/m_{1}\right)+\log \left(m_{1}/4\right)
    }.
\end{align*}
These upper and lower bounds have the same limit superior and the same limit inferior as
$\delta\rightarrow 0+$, so we obtain
\begin{align}
  \limsup_{\delta\rightarrow 0+}\frac{\log \cover(F\times G,\delta)}{-\log
    \delta} &= \limsup_{\delta\rightarrow 0+} \left(\frac{\log
      \cover(F,\delta)}{-\log \delta} + \frac{\log \cover(G,\delta)}{-\log
      \delta}\right)\label{product supremum equality} \intertext{and}
  \liminf_{\delta\rightarrow 0+}\frac{\log \cover(F\times G,\delta)}{-\log
    \delta} &= \liminf_{\delta\rightarrow 0+} \left(\frac{\log
      \cover(F,\delta)}{-\log \delta} + \frac{\log \cover(G,\delta)}{-\log
      \delta}\right).\label{product infimum equality}
  \intertext{Consequently, in the case $F=G$}
  \limsup_{\delta\rightarrow 0+}\frac{\log \cover(F\times F,\delta)}{-\log
    \delta} &=2 \limsup_{\delta\rightarrow 0+} \frac{\log
    \cover(F,\delta)}{-\log \delta}\notag\\
\text{and}\qquad  \liminf_{\delta\rightarrow 0+}\frac{\log \cover(F\times F,\delta)}{-\log
    \delta} &=2 \liminf_{\delta\rightarrow 0+} \frac{\log
    \cover(F,\delta)}{-\log \delta}.\notag
\end{align}
\end{proof}
We remark that the general box-counting dimension product inequalities
follow from \eqref{product supremum equality} and \eqref{product
  infimum equality} and the fact that taking limits superior is
subadditive whilst taking limits inferior is superadditive.

\bibliographystyle{plain}
\bibliography{biblio.bib}

\end{document}